\newtheorem{theorem}{Theorem}[section]
\newtheorem{proposition}{Proposition}
\theoremstyle{definition}
\newtheorem{definition}[theorem]{Definition}
\newtheorem{remark}{Remark}
\newcommand{\eps}[1]{{#1}_{\varepsilon}}
\title{Dynamics of charged elastic bodies\\ under diffusion at large strains}
\def\users{final-layout}  % when activatted, ``our'' debugging is suppressed
	\newcommand{\REPLACE}[2]{#2}
	\newcommand{\COMMENT}[1]{}
	\newcommand{\COMMENTGT}[1]{}
	\newcommand{\TODO}[1]{}
	\newcommand{\INTERNAL}[1]{}
	\newcommand{\QUESTION}[1]{}
	\newcommand{\DELETE}[1]{}
	\newcommand{\REM}[1]{\marginpar{\bfseries\tiny{\color{blue}}}}
\newcommand{\REPLACE}[2]{{\color{brown}\sout{#1}\uline{#2}\color{black}}}
	\newcommand{\COMMENT}[1]{{\color{red}\uuline{#1}\color{black}}}
	\newcommand{\COMMENTGT}[1]{{\hfill\large\color{red}***{#1}***\color{black}\hfill}\\}
	\newcommand{\TODO}[1]{{\color{red}\uuline{#1}\color{black}}}
	\newcommand{\INTERNAL}[1]{\footnote{#1}}
	\newcommand{\QUESTION}[1]{{\color{brown}\uuline{#1}\color{black}}}
	\newcommand{\DELETE}[1]{{\color{brown}\sout{#1}\color{black}}}
	\newcommand{\REM}[1]{\marginpar{\bfseries\tiny{\color{blue}#1}}}
\newcommand\DELETEDELETE[1]{}
\definecolor{giuseppe}{RGB}{0,0,80}
\definecolor{gt}{RGB}{0,0,80}
\definecolor{hide}{RGB}{200,200,200}
\definecolor{inprogress}{RGB}{100,100,100}
\definecolor{issue}{RGB}{0,255,255}
\definecolor{tocheck}{RGB}{150,150,150}
\newcommand\comment[1]{}
\newcommand\dx{\mathop{{\rm d}x}}
\newcommand\dt{\mathop{{\rm d}t}}
\newcommand{\R}{\mathbb{R}}
\newcommand{\N}{\mathbb{N}}
\renewcommand{\d}{\mathrm{d}}
\newcommand\DT[1]{\mathchoice
	{{\buildrel{\hspace*{.1em}\text{\LARGE.}}\over{#1}}}
	{{\buildrel{\hspace*{.1em}\text{\Large.}}\over{#1}}}
	{{\buildrel{\hspace*{.1em}\text{\large.}}\over{#1}}}
	{{\buildrel{\hspace*{.1em}\text{\large.}}\over{#1}}}}
\newcommand\DDT[1]{\mathchoice
	{{\buildrel{\hspace*{.1em}\text{\Large.\hspace*{-.1em}.}}\over{#1}}}
	{{\buildrel{\hspace*{.1em}\text{\large.\hspace*{-.1em}.}}\over{#1}}}
	{{\buildrel{\hspace*{.1em}\text{\large.\hspace*{-.1em}.}}\over{#1}}}
	{{\buildrel{\hspace*{.1em}\text{\large.\hspace*{-.1em}.}}\over{#1}}}}
\newcommand{\Vdots}{\mathchoice{\:\begin{minipage}[c]{.1em}\vspace*{-.4em}$^{\vdots}$\end{minipage}\;}{\:\begin{minipage}[c]{.1em}\vspace*{-.4em}$^{\vdots}$\end{minipage}\;}{\:\tiny\vdots\:}{\:\tiny\vdots\:}}
\newcommand{\divS}{\mathrm{div}_{\scriptscriptstyle\textrm{\hspace*{-.1em}S}}^{}}
\newcommand{\Cof}{\operatorname{\rm{Cof}}}
\newcommand{\eq}{\eqref}
\renewcommand{\eps}{\varepsilon}
\renewcommand{\nu}{\sigma}
\newcommand{\stress}{\bm S}
\newcommand{\calE}{\mathscr{E}}
\newcommand{\calF}{\mathscr{F}}
\def\nablaIT{\setbox0=\hbox{$\nabla$}%
	\pdfliteral{q 1 0 .3 1 0 0 cm}\rlap{$\nabla$}\pdfliteral{Q}\kern\wd0 }
\newcommand\nablait{\operatorname{\nablaIT}{}}
\newcommand\dnablait{\nablait^2}
\newcommand{\DDD}[3]{\begin{array}[t]{c}#1\vspace*{-1em}\\_{#2}\vspace*{-.45em}\\_{#3}\end{array}}
\newcommand{\ddd}[3]{\DDD{\begin{array}[t]{c}\underbrace{#1}\vspace*{.6em}\end{array}}{\text{\footnotesize #2}}{\text{\footnotesize #3}}}
\begin{document}
\author{Tom\'a\v s Roub\'i\v cek\footnote{Mathematical Institute, Charles University,
 	Sokolovsk\'a 83, 18675~Praha~8, Czech Republic, and  
 	Institute of Thermomechanics, Czech Acad. Sci., 
 	Dolej\v skova 5, 18200 Praha 8, Czech Republic. Email: \texttt{tomas.roubicek@mff.cuni.cz}}~ and Giuseppe Tomassetti\footnote{Universit\`a degli Studi Roma Tre, Dipartimento di Ingegneria,
 	Via Vito Volterra 62, 00146~Roma, Italy. Email: \texttt{giuseppe.tomassetti@uniroma3.it}}}

\begin{sloppypar}	
  \maketitle
%  \end{sloppypar}
%\end{document}

\begin{abstract}
	We present a model for the dynamics of elastic or poroelastic bodies with 
	monopolar repulsive long-range (electrostatic) interactions at large strains. 
	Our model respects (only) locally the non-self-interpenetration condition 
	but can cope with 
	possible global self-interpenetration, yielding thus a certain
	justification of most of engineering calculations which ignore these
	effects in the analysis of elastic structures. These models 
	necessarily combines Lagrangian (material) description with Eulerian (actual)
	evolving configuration evolving in time. Dynamical problems 
	are studied by adopting the concept of nonlocal nonsimple materials,
	applying the change of variables formula for Lipschitz-continuous mappings,
	and relying on a positivity of determinant of
	deformation gradient thanks to a result by Healey and Kr\"omer.
\end{abstract}
\paragraph{2010 Mathematics Subject Classification.} 35Q60, % PDEs in connection with optics and electromagnetic theory
	35Q74, % PDEs in connection with mechanics of deformable solids
	65M60, % Finite elements, Rayleigh-Ritz and Galerkin methods, finite methods
	74A30, % Nonsimple materials.
	74F15, %Coupling of solid mechanics with other effects: Electromagnetic effects
	76S99, %Flows in porous media; filtration; seepage: None of the above, but in this section
	78A30. % Electro- and magnetostatics

% Please provide minimum  5 keywords.
\paragraph{Keywords. } {Elastodynamics, \color{black}poroelasticity\color{black},
	nonsimple materials, 
	long-range interactions.}
      \makeatletter
      \edef\@thefnmark{}
\@footnotetext{\paragraph{Acknowledgments.} {The authors are thankful
	to an anonymous referee for many comments to the model and to
	Dr.\ Giuseppe Zurlo for a discussion on the concept
	and applicability of the ideal dielectric model. \color{black}
	This research was partly supported through the grants
	17-04301S (as far as dissipative evolution concerns)
	and 19-04956S (as far as dynamic and nonlinear behaviour concerns)  of the
	Czech Science Foundation, 
	through the institutional project RVO:\,61388998 (\v CR),
	and the Grant of Excellence Departments, MIUR-Italy
	(Art.1, commi 314-337, Legge 232/2016),
	as well as through
	INdAM-GNFM.}  
    }
    \makeatother

%The title of your section 1

\section{Introduction}
In most applications of continuum mechanics, mechanical self interactions 
between two parts of the same body $\varOmega$ consist only 
in contact forces exchanged by these parts along their common boundary. There 
are, however, situations of physical interest 
such as for instance electrically-charged, self-gravitating, magnetized, or polarized bodies,
% self gravitating bodies, or electrostatically charged bodies, 
where mechanical interactions between 
parts of the same body are non negligible even if these parts are separated 
by a positive distance. 

\def\mono{m}
\def\calE{\mathscr{E}}
\def\MM{\bf m}
\def\mm{\bm m}
In these situations, a peculiarity of the ensuing mathematical model is that the equations that govern the evolution of the body are formulated in Lagrangean form, (i.e.\ the independent space variable belongs to a fixed reference configuration) while the 
equations that determine the long-distance self interactions \DELETEDELETE{through the potential of the electrostatic (likewise  gravitation,  demagnetising  or  depolarizing)  field} {(through the potential of the electrostatic, gravitation,  demagnetising  or  depolarizing  field)} are formulated in the Eulerian setting (i.e.\ the independent space variable belongs to the entire space). Combining the Lagrangian and Eulerian descriptions usually requires injectivity of the deformation related with non-selfpenetration, which can be ensured in static or quasistatic problems but seems extremely difficult in dynamical problems if it would be handled with a mathematical rigor. Overcoming this difficulty, most engineering calculations under large strains ignores non-selfpenetration, too.

To further clarify the issues related to a possible non-invertibility of the deformation, let us consider, as a prototype of body supporting long-range self interactions of monopolar type, for instance an electrically-charged body occupying in its reference configuration a regular region $\varOmega$ of $\R^d$, where $d$ is the space dimension. The most natural way to specify the charge content of the body (just like mass content) is through a \emph{referential charge density} $q:\varOmega\to\mathbb R$, whose value $q(x)$ at a referential point $x\in\varOmega$ is the amount of charge per unit volume at that point when the body is in its undeformed state.

When the body undergoes a deformation $\bm\chi:\varOmega\to\R^d$, the charge bound to the material points of the body undergoes a rearrangement. The energetic cost associated to such rearrangement is the integral over $\R^d$ of the squared gradient of a scalar potential determined by solving the Poisson equation with source term a \emph{spatial charge density} $\mathsf q:\R^d\to\mathbb R$ supported on $\bm\chi(\varOmega)$. If the deformation is smooth and invertible both locally and globally, in the sense that it is one-to-one and its Jacobian $J=\det\nabla\bm\chi$ is bounded from below by a positive constant, then the spatial charge density at a given point $\mathsf x\in\R^d$ is given by $\mathsf q(\mathsf x)=q(\bm\chi^{-1}(\mathsf x))/\det(\nabla\bm\chi(\bm\chi^{-1}(\mathsf x)))$ if $\mathsf x\in\bm\chi(\varOmega)$, and $\mathsf q(\mathsf x)=\mathsf 0$ otherwise. Since the standard formulation of the boundary-value problem that governs nonlinear elasticity does not ensure injectivity of the deformation map (the model does not incorporate the physics of self contact), we face an issue when willing to give sense to the notion of spatial charge density.

Similar issues are encountered in bodies supporting long-distance self interaction of dipolar type. Although the mechanical treatment of these types of interactions is even more subtle than those of monopolar type, as pointed out in \cite{DesimonePodioguidugli1997}, the relevant  mathematical literature appears richer. In the static and quasistatic cases, when equilibria are sought through energy minimization, non self-penetration can be ensured by enforcing the Ciarlet-Ne\v cas condition \cite{CiaNec87ISCN}. This fact was exploited, for example, in the paper \cite{Rogers1988}, which contains the analysis of the variational problem that governs equilibrium configurations of polarized and magnetized elastic bodies. Existence results are also offered in \cite{KrStZe15ERIM} in the incompressible case. An existence theorem for magnetized and polarized body can be found in \cite{Silhavy2017a}, along with a characterization of the conditions under which the stored energy is polyconvex.
%Still in the static case, the analysis of a problem in magnetoelasticity taking into account diffusion and temperature effects was carried out in \cite{RouTom2018}. 

In dynamical problems, on the other hand, the device of energy minimization is of no use so that, in particular, the mentioned Ciarlet-Ne\v cas cannot be exploited, and handling non-selfpenetration with mathematical rigor would necessarily require the addition of  further ingredients that take into account self contact. {For time-independent problems in second-gradient elasticity, an result from \cite{PalHea17ISCS} is available, showing that there exist weak solutions which
	satisfy the constraint of injectivity, provided that equilibria are attained with the help of a normal reaction in the form of a Radon measure supported on the boundary. In dynamics, however, no such result is available to our knowledge.}

%The issue seems to have been avoided in the mathematical literature so far and, still in the case of long distance dipolar interactions, these have been incorporated only in the case of small strains \cite{Roubicek2013}.\COMMENT{I WOULD DELETE THIS SENTENCE + OUR REFERENCE - SMALL STRAINS/DIPOLES NOT RELEVANT HERE(?)}

This issue of non-invertibility of the deformation map was already pointed out in a previous paper of ours \cite{RouTom2018} and prompted us for further investigations. In the present undertaking we propose a mathematical formulation {which, by means of Federer's change-of-variable formula \cite{Fede69GMT},  does not rely on the injectivity of the deformation map.}

Of course, because of the inherent features of classical elastodynamics whose 
particular hyperbolic structure prevents a proper mathematical treatment 
\cite{Ball2002}, we are anyhow to add further ingredients to conventional 
nonlinear elasticity. Precisely, we include a regularization which 
consists in a non-local energy that depends 
%\DELETEDELETE{nonlocally} 
on the second gradient of the deformation map. We refer to the resulting model 
as a non-local \DELETEDELETE{and} non-simple \REPLACE{body}{material}. 
Beside the important analytical regularizing property {for the otherwise 
	nonlinear hyperbolic system}, one motivation of such nonlocal nonsimple 
material-concept is dispersion of elastic waves with a large degrees of 
freedom {for} covering both normal and anomalous dispersion;
cf.\ the analysis in \cite{Jira04NTCM} performed at small strains. Further 
discussions on nonlocal theories can be found, for example, in \cite{Eringen2015}.\COMMENT{MAYBE STILL MORE CITATIONS???}

The singularity of the kernel of \DELETEDELETE{this}{the} nonlocal term is chosen to guarantee that deformations with finite energy belong to a suitable fractional Sobolev space. Such device has two benefits: first, by as a consequence of a result of T.\,Healey and S.\,Kroemer \cite{HeaKro09IWSS}, the determinant of the deformation gradient is bounded from below by a positive constant; second, since the deformation is Lipschitz continuous, it is possible to apply \DELETEDELETE{a standard}{the aforementioned} change-of-variable theorem, which is intimately related to the area formula \cite{Fede69GMT}, to provide an alternative formulation of the equations that govern the scalar potential that carries the information about long-distance interactions. This fact constitutes a novelty with respect to our previous treatment in \cite{RouTom2018}, where self interactions were not accounted for in the dynamic setting. {The choice of a fractional Sobolev space rather than an integer type permits us to use a quadratic 
	regularization of the stored energy, which is crucial to keep linearity of 
	the term that contains higher-order spatial derivatives in the hyperbolic-type 
	evolution equation. (In principle, one can also think about 
	%We have discarded the option of 
	a local quadratic regularization 
	involving the third-order gradient
	% because
	but this would have produced additional complications in the formulation of 
	the boundary conditions.)}

Although the formulation we propose applies to both monopolar and dipolar self interactions, in the present paper we limit our analysis to interactions of monopolar type for technical reasons. To be more specific, the term that accounts for monopolar long-distance interactions appears in the force balance in the form of the \emph{first gradient} of the scalar potential, composed with the deformation; for dipolar interactions, the corresponding term involves the \emph{second gradient}. In the first case, by a suitable manipulation, we can cast the long-distance force term in a form that does not involve the gradient of the scalar potential; in the second case, this is not possible.

A further limitation of our analysis is the assumption
that the material be an \emph{ideal dielectric} in the sense of
\cite{Zhao2007}. This assumption amount to neglecting the
material part of the electromechanical coupling. From the technical
standpoint, this limitation is somehow imposed by the mathematical
structure of the problem, which makes it difficult to obtain strong
convergence of the electric field. Indeed, if we wanted to include
electromechanical coupling, we should add to the free energy a term
that depends on the gradient of the electric field, in a manner
similar to what we do with the deformation gradient. On the other
hand, the ideal dielectric model, notwithstanding its limitations, finds several applications (see for
instance \cite{Zurlo2018} for an application to material
instabilities).\color{black}

A related technical issue also forces us to replace the conventional Poisson equation, with a $p$-regularization, so as to guarantee that in our constructive approach to existence of solutions the approximated scalar potentials converge in the space of continuous functions. {When taking into account the electrostatic scalar potential in the mechanical force balance, the potential must be pulled back into the reference configuration. Such pull back operation involves a composition between the scalar potential and the deformation map, which may be very badly if the scalar potential converge strongly only in a Lebesgue space. In fact, we need a stronger convergence in the space of continuous functions.}

We leave it as an open problem the application of our formulation within the standard setting of electrostatics, i.e., without the $p$-regularization.

%\DELETE{In conventional engineering calculations the aforementioned difficulties are overcome just by ignoring self-penetration. Here, reflecting this engineering approach, we want to develop a theory which does not rely on the injectivity of deformation and can admit selfpenetration, without discussing physical relevance of situations when the bodies indeed penetrate themselves.}

The plan of our paper is the following: Section~\ref{sec-mono}
exposes the repulsive monopolar long-range interactions  when charges are bound to the body, 
%of both attractive and repulsive character, 
and discuss the abstract structure of the problem.
%, and outlines some generalizations.
The rigorous analysis as far as an existence of weak 
solutions is then performed in Section~\ref{sec-mono-anal} by employing
rather constructive Galerkin approximation.
It is then generalized for time-evolving monopoles (electric charge) 
related to some diffusant (in this case, charge can rearrange themselves in the body). This is done in Section~\ref{sec-diffusion} 
%presents the modification for the 
%dipolar interactions which are in certain aspects similar to the
%repulsive monopolar interactions but possess specific differences. 
by using the concept of poroelastic solids and Biot-like diffusion driven
by the gradient of a chemical potential. 
\color{black} Yet, as pointed out already by M.A.\,Biot at al.
\cite{biot1972theory}, this poroelastic-like model ``is not
restricted to the presence of actual pores. The fluid may be in
solution in the solid, or may be adsorbed. Such pheonomena are
usually associated with the concept of capillarity or osmotic pressure.''
\color{black}
In the final Section~\ref{sec-rem}, we end the paper by outlining some 
modifications or generalizations and the difficulties which accompany them.
%\DELETE{We will distinguish the variables in the mentioned 
%reference versus the actual space configurations by fonts: 
%the reference-configuation notation will be italicised/slanted,
%while the space-configuation notation will be uprighted. Thus,
%e.g.\ we will distinquish the potential $\phi$ versus 
%$\upphi$, the domains $\varOmega$ versus $\Omega$, 
%or the gradients or $\nablait$ versus $\nabla$, etc.}

\section{Elastodynamics with monopolar repulsive interactions}\label{sec-mono}
%        ~~~~~~~~~~~~~~~~~~~~~~
%        \COMMENT{MOVE NOTATION LATER}

As a general \DELETEDELETE{typographic}{typographical} convention, we shall use italicised/slanted fonts to denote mathematical objects that pertain to the reference configuration, and upright fonts for objects that pertain to the current configuration. Consistent with this convention we distinguish the referential domain $\varOmega$ from its image $\Omega$ under the deformation map. We shall stick to a similar convention when dealing with differential operators: for example, we will use either $\nablait$ or  $\nabla$ when referring to fields whose domain of definition is the reference domain $\varOmega$ or the physical space $\R^d$.\COMMENT{we do not use $\Omega$ anymore, and it is not true that we use this notation systematically, since we write $\rm div$ in the same manner both in Eulerian and Lagrangian setting. I would completely remove this comment, which is not indispensable, and which may otherwise confuse some readers when we make exceptions to the above convention!}

Thorough the whole paper, the reference configuration $\varOmega$ is assumed to be a bounded Lipschitz domain in $\R^d$, whose boundary we denote by $\varGamma$. We use the standard notation for the Lebesgue $L^p$-spaces and
$W^{k,p}$ for Sobolev spaces whose $k$-th distributional derivatives 
are in $L^p$-spaces. We will also use the abbreviation $H^k=W^{k,2}$. 
%\tiny Moreover, we use the standard notation  $p'=p/(p{-}1)$, and 
%$p^*$ for the Sobolev exponent $p^*=pd/(d{-}p)$ for $p<d$ while
%$p^*<\infty$ for $p=d$ and $p^*=\infty$ for $p>d$,
%and the ``trace exponent'' $p^\sharp$ defined 
%as $p^\sharp=(pd{-}p)/(d{-}p)$ for $p<d$ while
%$p^\sharp<\infty$ for $p=d$ and $p^\sharp=\infty$ for $p>d$.
%Thus, e.g., $W^{1,p}({\varOmega})\subset L^{p^*}\!({\varOmega})$ or 
%$L^{{p^*}'}\!({\varOmega})\subset W^{1,p}({\varOmega})^*$=\,the dual to $W^{1,p}({\varOmega})$. \normalsize
In the vectorial case, we will write $L^p({\varOmega};\R^d)\cong L^p({\varOmega})^d$ 
and $W^{1,p}({\varOmega};\R^d)\cong W^{1,p}({\varOmega})^d$. \COMMENT{DO WE USE THIS NOTATION?} Also,
we admit $k$ noninteger with the reference to the Sobolev-Slobodetski\u\i\ 
spaces. Note that, in this notation, we have the compact embedding 
$H^{2+\gamma}({\varOmega})\subset W^{2,p}({\varOmega})$
if $p>2d/(d-2\gamma)$ and $W^{2,p}({\varOmega})\subset W^{1,p^*}({\varOmega})$.
In particular $H^{2+\gamma}({\varOmega})\subset C^1(\bar{\varOmega})$ if 
$d<p<2d/(d-2\gamma)$, which can be satisfied if $\gamma>d/2-1$ as
employed in \eqref{ass-HK} to facilitate usage of the results from 
\cite{HeaKro09IWSS}, cf.\ \eqref{det-positive-on-lev} below. We also denote by $\operatorname{meas}_d$ the 
$d$--dimensional Hausdorff measure. 
%\DELETE{Yet, we will confine ourselves to the
%physically relevant case $d=3$.}

On the time interval $I=[0,T]$, we consider the Bochner spaces 
$L^p(I;X)$ of Bochner measurable mappings $I\to X$ whose norm is in $L^p(I)$,
with $X$ being a Banach space, while $C_{\rm w}(I;X)$ will denote
the Banach space of weakly measurable mappings $I\to X$. When writing estimates, we denote by $C$ a generic positive 
constant which may change from one formula to another. 
%In case more than one such constant appears in the same formula, we shall decorate its symbol with a progressive numerical subscript (see for instance \eqref{} below) 

\color{black}When electrostatic interactions are
accounted for, the most general constitutive equation for the free
energy includes a dependence on both the
deformation and on the electric field (see for example the discussion
in Chap.4 of \cite{Dorfmann2014}, and in particular Eq. 4.38
therein). This general assumption would allow us to incorporate several coupling effects, such as
for instance classical piezoelectricity
\cite{Toupin1956,Toupin1963}. On the other hand, interesting
electromechanical effects can still be captured  through the \emph{ideal
	dielectric model}  \cite{Zhao2007}. In this model, the referential free energy density splits additively in a
mechanical part and an electrostatic part. More specifically, this is equivalent to letting the function
$\phi$ in \cite[Eq. 4.56]{Dorfmann2014} (see also Eq.s 4.37 and 4.38 of
the same reference) to depend only on the deformation. The
contribution of the electric part of the free energy will be
introduced later. \color{black}As to the mechanical part, we rely on a
non-simple material model by defining \color{black}the mechanical part
of the free--energy of the body as

\begin{align}
\calE_{\rm mech}(\bm\chi)=&
\int_\varOmega\!\varphi(\nablait \bm\chi)
\,\d x+\mathscr{H}(\dnablait\bm\chi),
\label{static-gravity-functional}
\end{align}
with $\varphi:{\rm GL}^+(d)\times\R\to\R$ a specific free energy with 
${\rm GL}^+(d):=\{\bm F\in\R^{d\times d};\ \det\bm F>0\}$.
The quadratic form $\mathscr{H}$ in \eqref{static-gravity-functional} is 
defined by 
\begin{align}
\mathscr{H}(\nablait^2\bm\chi)=\frac{1}{4}\sum_{i=1}^d\int_{\varOmega\times\varOmega}(\nablait^2 \chi_i(x){-}\nablait^2\chi_i(\tilde x))
%\Vdots
{:}\mathfrak{K}(x,\tilde x)
%\Vdots
{:}(\nablait^2\chi_i(x){-}\nablait^2\chi_i(\tilde x))
\,\d\tilde x\,\d x
\label{8-nonlocal-f-hyper}
\end{align}
with the hyperelastic-moduli 
symmetric positive-semidefinite kernel 
$\mathfrak{K}:\varOmega\times\varOmega\to\R^{d\times d\times d\times d}$ satisfying $\mathfrak{K}(x,\tilde x)=\mathfrak{K}(\tilde x,x)$
and with the scalar $\chi_i:\varOmega\to\R$ the $i$--th component of the deformation map. Thanks to Fubini's theorem, the non-local strain energy has the representation
\begin{align}
\mathscr{H}(\nablait^2\bm\chi)=\frac{1}{2}\sum_{i=1}^d\int_{\varOmega\times\varOmega}\mathfrak H_i(\nablait^2\chi_i):\nablait^2\chi_i\,\d x,
%\Vdots
\label{9-nonlocal-f-hyper}
\end{align}
where the second--order tensors $\mathfrak K_i$, $i=1,\ldots,d$ are defined as the G\^ateaux differential of $\mathscr{H}$ with respect to the $i$--th component $\bm\chi_i$ of the deformation, namely
\begin{equation}\label{eq:76}
\big[\mathfrak{H}_i(\nablait^2\chi_i)\big](x)=
\int_{\varOmega}
%(G_i(x)-G_i(\tilde x)){:}
\mathfrak{K}(x,\tilde x)
%\Vdots
{:}(\nablait^2\chi_i(x)-\nablait^2\chi_i(\tilde x))\,\d\tilde x.
\end{equation}
This construction automatically ensures frame indifference of the regularizing energy, and if we denote by $\{\bm e_i\}_{i=1}^d$ the canonical basis of $\R^d$, then the third--order tensor
\begin{equation}\label{eq:75}
\mathfrak H=\sum_{i=1}^d \bm e_i\otimes\mathfrak H_i
\end{equation}
is the hyperstress work conjugate of $\nablait^2\bm\chi$.

%\COMMENT{.... frame indifferency............}
%The triple dot 
%product on the right--hand side of the kernel $\mathfrak K$ is understood .............
As we shall see below with mode detail, the kernel $\mathfrak{K}$ of the regularization term $\mathscr H(\nablait^2\bm\chi)$ is chosen singular 
on the diagonal $\{x=\tilde x\}$
in such a way to ensure that deformations with bounded energy are in a fractional Sobolev space $H^{2+\gamma}(\varOmega;\R^d)$ with \DELETEDELETE{$\gamma\ge 1/2$}{$\gamma\ge d/2$}. This entails, in particular, that if the deformation $\bm\chi(t)$ at time $t$ has bounded energy, then $\bm\chi(t)\in W^{2,p}(\varOmega;\R^d)$ with $p>d$. Then, {as shown in \cite{HeaKro09IWSS}}, a suitable growth assumption on the bulk free energy $\varphi$ as the determinant of its deformation gradient tends to zero guarantees that the determinant is uniformly bounded from below away from zero, and hence in particular the deformation is locally invertible.
%As already pointed out in the introduction, within the static setting, besides local invertibility, we may also achieve global invertibility of solutions by imposing the Ciarlet-Ne\v cas condition. \COMMENT{WE ALREADY SAID IT} To the best of our knowledge, however, global invertibility of solutions cannot be guaranteed in the dynamical setting, not even with the aid of the regularizing term. 

Let us assume that the body is endowed with a Lagrangian density $q(x)$ of electric charge which is responsible for long-distance interaction, both between the body and the exterior and between the body with itself. If we assume that $q\in L^1(\varOmega)$, then $q$ induces a signed measure on $Q$ on the reference configuration in the standard fashion: for every Borel set $\mathcal P\subset\varOmega$, the quantity $Q(\mathcal P)=\int_{\mathcal P}q(x)\d x$ is the total charge contained in $\mathcal P$.

We stipulate that when the body is set in motion the charges are redistributed in space in a manner described by a set--valued function $\mathsf Q(t,P)$ defined in the following fashion: given a \emph{spatial control volume} $P \subset\R^d$, the amount of charge contained in $P$ after the deformation is 
$\mathsf Q(t,P):=Q({\stackrel{\leftarrow}{\bm\chi}}(t,P))
=\int_{\stackrel{\leftarrow}{\bm\chi}(t,P)}q(x)\,\d x$, where {we use the notation
	for the set-valued inverse deformation field:} 
$$
\stackrel{\leftarrow}{\bm\chi}(t,P)=\{x\in\varOmega:\bm\chi(t,x)\in P\}.
$$ 
In other words, the charge contained in $P$ is the amount of charge continued in the counterimage of $P$ under $\bm\chi(t)$.\COMMENT{Maybe $\varPi$ in place of $\mathcal P$ and $\Pi$ in place of $P$ here?}

Clearly, for this definition to make sense, we must be sure that the counterimage of $P$ is a Lebesgue--measurable set (this is another motivation for putting in the model an ingredient that enforces deformations maps to be regular). This is true, however, because if $\bm\chi\in W^{2,p}(\varOmega;\R^d)$ then $\bm\chi$ is Lipschitz continuous and hence it maps Borel sets into Borel sets and viceversa. Moreover, it can be shown that  $\mathsf Q(t,\cdot)$, the Eulerian charge distribution at time $t$, admits the following density:
\begin{equation}\label{eq:55}
\mathsf q(t,\mathsf x):=\sum_{x\in\stackrel{\leftarrow}{\bm\chi}(t,\mathsf x)}\frac{q(x)}{\det(\nablait\bm\chi(t,x))}\qquad\forall (t,\mathsf x)\in I\times\mathbb R^d.
\end{equation}
This result is indeed a consequence of a well-known change-of-variables formula due to Federer \cite{Fede69GMT} (see also  \cite[Thm.~3.9]{EvansGariepy2nd}), which says that for every integrable function $f$, 
\begin{equation}\label{eq:54}
\int_{\mathbb R^d}\sum_{x\in\stackrel{\leftarrow}{\bm\chi}(t,\mathsf x)}{f(x)}\,\d\mathsf x=\int_{\varOmega}f(x)\det(\nablait\bm\chi(t,x))\,\d x.
\end{equation}
Let us notice that, in the special case when $f=\mathsf f\circ\bm\chi$, with $\mathsf f$ a Lebesgue-measurable  function on $\Omega(t)=\bm\chi(t,\varOmega)$, formula \eqref{eq:54} becomes $\int_{\Omega(t)}\mathsf f\operatorname{card}\stackrel{\leftarrow}{\bm\chi}(t,\mathsf x)\,\d\mathsf x=\int_{\varOmega}\mathsf f\circ\bm\chi\det(\nablait\bm\chi(t,x))\,\d x$. This last formula was proved my Marcus and Mizel in \cite{Marcus1973} under the sole assumption $\bm\chi(t,\cdot)\in W^{1,p}(\varOmega;\R^d)$ (i.e., without requiring Lipschitz continuity of the deformation). Moreover, when \eqref {eq:54} is used with $f=1$, we obtain the Area Formula \cite{Fede69GMT}
\begin{equation}
\int_{\mathbb R^d} \operatorname{card}\Big(\stackrel{\leftarrow}{\bm\chi}\!(t,\mathsf x)\Big)\operatorname{\d\mathsf x}=\int_\varOmega \det(\nablait\bm\chi(t,x))\operatorname{\d\mbox{$x$}}<\infty,
\end{equation}
a result which implies that the cardinality of the 
preimage $\stackrel{\leftarrow}{\bm\chi}\!(t,\mathsf x)$ is finite for almost all $\mathsf x$, so that the summation in \eqref{eq:55} extends  over a finite set. 
\renewcommand\mono{q}
In addition to the spatial charge density $\mathsf q$ associated to the body charges, we will consider a time-dependent {(in our proof later we shall assume that $\mathsf q_{\rm ext}$ is time independent)} external charge density $\mathsf q_{\rm ext}(t,\mathsf x)$. We shall assume that the total density $\mathsf q+\mathsf q_{\rm ext}$ determines a  potential $\upphi(t,x)$ as the solution of the regularized Poisson equation
\begin{align}\label{gravity-large-strain}
-\operatorname{div}(\varepsilon(|\nabla\upphi|)\nabla\upphi)=\mathsf{\mono}
+\mathsf{\mono}_{\rm ext}\qquad\text{in }\R^d,
\end{align}
on the whole space, with vanishing conditions at infinity:
\begin{equation}\label{eq:57}
\lim_{|\mathsf x|\to+\infty}\upphi(t,x)=0\DELETEDELETE{,}{.}
\end{equation}
\DELETEDELETE{and with 
	$1_{S}$ denoting the characteristic function of a set $S$.} Here we assume that 
the effective permittivity is
\begin{equation}
\varepsilon(r)=\varepsilon_0(1+\epsilon_r|r|^{p-2})\qquad\forall r\ge 0,
\end{equation}
where $\varepsilon_0$ is the permittivity of vacuum,
i.e.\ $\varepsilon_0\doteq{\rm 8.854\times10^{-12}Fm^{-1}}$, 
and $\epsilon_r$ is a coeffient in a position like the relative (nonlinear) 
permittivity having the physical dimension $({\rm V}/{\rm m})^{2-p}$ 
with $p>d$ is a regularization exponent. 

The weak solution of \eqref{gravity-large-strain}--\eqref{eq:57} with $\mathsf q$ given by \eqref{eq:55} is the unique stationary point with respect to $\upphi$ of the electrostatic energy
\begin{align}\label{eq:65}
{\calE}_{\rm elec}(\bm\chi,\upphi)=&
%-\int_{\R^d}\frac {\varepsilon_0} 2 \big|\nabla\upphi\big|^2+\frac {\varepsilon_1} p \big|\nabla\upphi\big|^p\d {\mathsf x}+
\int_{\mathbb R^d} \Bigg(\sum_{x\in\stackrel{\leftarrow}{\bm\chi}(t,\mathsf x)}\frac{q(x)}{\det(\nablait\bm\chi(t,x))}\Bigg)\upphi
(\mathsf x)-\frac {\varepsilon_0} 2 \big|\nabla\upphi\big|^2-\frac {\varepsilon_1} p \big|\nabla\upphi\big|^p\,\d{\mathsf x},
\end{align}
where we have set $\varepsilon_1=\epsilon_r\varepsilon_0$.

\DELETEDELETE{Equation \eqref{eq:65} is not particularly convenient to us, because of the presence of the inverse function  $\stackrel{\leftarrow}{\bm\chi}$. However, an}{An} application of the change-of-variable formula \eqref{eq:54} with $f(x)=
\mono(x)\upphi(\bm\chi(t,x))/\det(\nablait\bm\chi(t,x))$ yields {(under the assumption that the deformation gradient is Lipschitz-continuous and that determinant of its  gradient is bounded from below by a positive constant, the function $f$ is integrable)}
\begin{align}\nonumber
\int_{\mathbb R^d}\Bigg(\sum_{x\in\stackrel{\leftarrow}{\bm\chi}(t,\mathsf x)}\frac{\mono(x)}{\det(\nablait\bm\chi(t,x))}\Bigg)\upphi(t,\mathsf x)\operatorname{\d\mathsf x}&=\int_{\mathbb R^d}\Bigg(\sum_{x\in\stackrel{\leftarrow}{\bm\chi}(\mathsf x)}
\frac{\mono(x)\upphi(t,\bm\chi(t,x))}{\det(\nablait\bm\chi(t,x))}\Bigg)\operatorname{\d\mathsf x}\\
&=\int_{\varOmega}\mono(x)\upphi(t,\bm\chi(t,x))\,\d x.
\end{align}
\DELETEDELETE{We note that, in view of the integrability of $q(x)$, the above choice yields an integrable function $f(x)$ because the solutions of \eqref{gravity-large-strain} are continuous and, as we shall see later, the solutions we find have the property that the determinant of the deformation gradient is bounded from below by a positive constant.} Using the last result and adding the mechanical and electrostatic energy, 
we obtain the total free energy:
\begin{align}%\nonumber
\calE(\bm\chi,\upphi)=&
\underbrace{\int_\varOmega\!\varphi(\nablait\bm\chi)
	\,\d x
	+\mathscr{H}(\nablait^2\bm\chi)
	%-\int_\varGamma\bm g\cdot\bm\chi\d S
}_{\displaystyle \calE_{\rm mech}(\bm\chi)}
%\\&
\ +\underbrace{\int_{\varOmega}\!\mono\upphi\circ\bm\chi\,\d x
	-\int_{\R^d}\!%\mathsf{\mono}_{\rm ext}\upphi
	\frac {\varepsilon_0}2 \big|\nabla\upphi\big|^2
	+\frac {\varepsilon_1} {p}\big|\nabla\upphi\big|^p\,\d\mathsf x
}_{\displaystyle \calE_{\rm elec}(\bm\chi,\upphi)}\,,
\label{static-gravity-functional+}
\end{align}
\color{black}
To make the comparison with the existing literature easier, we observe
that, starting from \eqref{static-gravity-functional+}, it is
customary to rewrite the electrostatic energy as an integral over the
reference configuration. In particular, defining the Lagrangean
potential $\phi=\upphi\circ\bm\chi$, the total energy would
take the form
written as
\begin{align}%\nonumber
\calE_{\rm Lagr}(\bm\chi,\phi)=\int_\varOmega\left[\psi(\nablait\bm\chi,\nablait\phi)-q\phi\right]\d x
+\mathscr{H}(\nablait^2\bm\chi),
\label{static-gravity-functional++}
\end{align}
with the Lagrangean energy density $\psi(\mathbf F,\mathbf e)=\varphi(\mathbf
F)+\varphi_{e}(\mathbf e)$  depending separately on the deformation
gradient $\mathbf F$ and on the Lagrangean electric field $\mathbf
e=-\nablait\phi$. As pointed out in the Introduction, this assumption
corresponds to adopting the ideal
dielectric model.
\color{black} % HERE ADDED SOME TEXT
Sometimes, $\calE$ is called an electrostatic Lagrangean because
of its saddle-point-like  structure\color{black}, cf.\ e.g.\
\cite[Sect.3.2]{ProWet09PEMF}.\color{black}
%\COMMENT{A REFERENCE?}
%\marginpar{\footnotesize The notion of saddle-point would be appropriate if $\mathscr E(\cdot,\upphi)$ was convex. However, this is not the case. Thus I wonder why we are speaking about a saddle point. --- I ADDED AT LEAST ``like''. IS IT BETTER?}  

We now define the kinetic energy as
\begin{align}\label{kinetic}
{\mathscr T}(\DT{\bm\chi})=\int_\varOmega\frac\varrho2|\DT{\bm\chi}|^2\,\d x,
\end{align}
where $\varrho=\varrho(x)$ with $\inf_{x\in\varOmega}\varrho(x)>0$ is 
the mass density. The (conventional) Lagrangean is then defined as 
\begin{align}\label{Lagrangean}
{\mathscr L}\big(\bm\chi,\DT{\bm\chi},\upphi\big):={\mathscr T}\big(\DT{\bm\chi}\big)-\calE(\bm\chi,\upphi).
\end{align}
Considering still an external electromechanical loading 
$\calF=(\calF_{\rm m}(t),\calF_{\rm e}(t))$ defined as a linear functional 
\begin{align}\label{load}
\langle\calF_{\rm m}(t),{\bm\chi}\rangle
=\int_\varOmega \bm f(t)\cdot\bm\chi\,\d x
+\int_\varGamma\bm g(t)\cdot\bm\chi\,\d S\quad\text{ and }\quad
\langle\calF_{\rm e}(t),\upphi)\rangle
=\int_{\R^d}\mathsf{\mono}_{\rm ext}\upphi\,\d\mathsf x,
\end{align}
the governing equation(s) can then be derived
by the {\it Hamilton variational principle} which 
\DELETEDELETE{says}{asserts} that, among all admissible motions on a fixed time interval $[0,T]$, 
the actual motion is such that the integral 
\begin{align}\label{hamilton}
\int_0^T{\mathscr L}\big(\bm\chi(t),\DT{\bm\chi}(t),\upphi(t)\big)
+\langle\calF(t),({\bm\chi},\upphi)(t)\rangle\,\d t
\ \
&\mbox{ is stationary,\ }
\end{align}
\mbox{i.e.~$(\bm\chi,\upphi)$ is its \emph{critical point}}.
This gives the system
\begin{subequations}\label{abstract-system}
	\begin{align}\label{abstract-system1}
	{\mathscr T}'\DDT{\bm\chi}+\partial_{\bm\chi}\calE(\bm\chi,\upphi)=\calF_{\rm m},
	\\\label{abstract-system2}
	\partial_{\upphi}\calE(\bm\chi,\upphi)=\calF_{\rm e},
	\end{align}\end{subequations}
where ${\mathscr T}'$ denotes the G\^ateaux derivative of the quadratic 
functional ${\mathscr T}$, i.e.\ a linear operator, and $\partial_{\bm\chi}\calE$ 
and $\partial_{\upphi}\calE$ are the respective partial G\^ateaux differentials.
The abstract ``algebraic'' part \eq{abstract-system2} forms a
{\it holonomic constraint}.

Taking into account the specific forms of the energies 
\eqref{static-gravity-functional+} 
and \eqref{kinetic}, the abstract system \eqref{abstract-system} results to\COMMENT{Shouldn't we use $div$ instead of ${\rm div}$ for the reference configuration?}
\begin{subequations}\label{system}\begin{align}\label{system1}
	&\varrho\DDT{\bm\chi}-{\rm div}\,{\stress}+\mono\nabla\upphi\circ\bm\chi=\bm f\ \ \ \
	\text{ with }\ {\stress}=\varphi'(\nablait\bm\chi)-{\rm div}\,\mathfrak{H}(\nablait^2\bm\chi),
	\\\label{system2}
	&\operatorname{div}((\varepsilon_0+\varepsilon_1|\nabla\upphi|^{p-2})\nabla\upphi)+\!\!\!\sum_{x\in\stackrel{\leftarrow}{\bm\chi}(\cdot,t)}\!
	\frac{\mono(x)}{\det(\nablait\bm\chi(t,x))}
	+\mathsf{\mono}_{\rm ext}(t,\cdot)=0
	\end{align}\end{subequations}
for a.a.\ $t\in I$, 
%\COMMENTGT{Here should be $-\kappa\Delta\upphi=%\chi_{\bm\chi(\varOmega)}^{}
%  \sum_{x\in\stackrel{\leftarrow}{\bm\chi}(\cdot,t)}\frac{\mono(x)}{\det(\nablait\bm\chi(x))}+\mathsf m_{\rm ext}$}
with $\nabla\upphi$ \DELETEDELETE{and $\Delta\upphi$} understood as space derivatives in the 
actual space configuration while other time/space derivatives are in the 
reference configuration, and the nonlocal hyperstress 
$\mathfrak{H}(\nablait^2\bm\chi)$ is given by {\eqref{eq:76} and \eqref{eq:75}}.
%\begin{align}
%\label{higher-nonlocal-stress}
%\big[\mathfrak{H}(\nablait^2\bm\chi)\big](x)
%=\int_\varOmega
%\mathfrak{K}(x,\tilde x)\Vdots\big(\nablait^2\bm\chi(x)-\nablait^2\bm\chi(\tilde x)\big)
%\,\d\tilde x\,.
%\end{align}
Note that the force in \eqref{system1} can be seen from 
\eqref{static-gravity-functional+} while the right-hand side of 
\eqref{system2} can be seen from the equivalent form \eqref{static-gravity-functional}. 
This system is augmented with the following boundary conditions 
\begin{subequations}\label{BC}
	\begin{align}\label{BC1}
	&\stress\bm n
	-\divS\mathfrak{H}(\nablait^2\bm\chi)=\bm g\ \ \text{ and }\ \ \mathfrak{H}(\nablait^2\bm\chi){:}(\bm{n}\otimes\bm{n})=0\qquad\text{on}\qquad\Sigma\\
	&\!\!\!\lim_{|\mathsf x|\to\infty}\upphi(t,\mathsf x)=0\quad\qquad \text{for all }t\in(0,T).
	\end{align}
\end{subequations}
and by the initial conditions
\begin{equation}\label{IC}
\bm\chi(\cdot,0)=\bm\chi_0,\qquad \DT{\bm\chi}(\cdot,0)=\bm v_0\qquad\text{in}\qquad\varOmega.
\end{equation}
%\COMMENT{+ B.C}

As there is no dissipation energy, the system \eqref{abstract-system} is (at 
least formally) conservative, i.e.~it conserves the mechanical energy 
${\mathscr T}\big(\DT{\bm\chi}\big)+\calE(\bm\chi,\upphi)$
\DELETEDELETE{conserved} during the evolution provided that {the forcing term vanishes}:
$\calF=0$. As there no kinetic energy associated with  $\upphi$-variable,  
%\COMMENT{Evolution is not only through inertia. IS IT NOW BETTER WITH VISCOSITY?? }
\eq{abstract-system} has the structure of an 
\emph{abstract differential-algebraic equation} (DAE) with $\upphi$ the 
``fast'' variable and $\bm\chi$ the ``slow'' 
variable. We conclude this section by showing that this system can be reduced to a single evolutionary partial differential equation for the deformation $\bm\chi$.

To begin with, we observe that $\calE(\bm\chi,\cdot)$ is strictly concave. This feature makes it possible for us to solve \eqref{abstract-system2} with respect to $\upphi$. The most convenient way to achieve this goal by means of the Legendre transform
% \DELETEDELETE{of $-\mathscr E$ with respect to the variable $\phi$}.
More specifically, we introduce the quantity
\begin{equation}
\mathscr E_{\rm tot}(\bm\chi,\upphi)=\mathscr E(\bm\chi,\upphi)-\langle\mathscr F,(\bm\chi,\upphi)\rangle,
\end{equation}
and then we let
\begin{equation}
[-\mathscr E_{\rm tot}(\bm\chi,\cdot)]^*(\bm\chi,\xi)=\sup_{\phi\in\R}\left(\langle\xi,\phi\rangle+\mathscr E_{\rm tot}(\bm\chi,\phi)\right),
\end{equation}
where, in the second formula, $\langle\cdot,\cdot\rangle$ denotes the duality between $W^{1,p}(\R^d)$ and $W^{-1,p}(\R^d)$. Given $\xi\in W^{-1,p}(\R^d)$, let us define $\upphi_\xi$ to be the solution of the following problem:
\begin{equation}
\varepsilon_0\Delta\upphi_\xi+\varepsilon_1\operatorname{div}(|\nabla\upphi_\xi|^{p-2}\nabla\upphi_\xi)
+\!\!\!\sum_{x\in\stackrel{\leftarrow}{\bm\chi}(\cdot,t)}\!
\frac{\mono(x)}{\det(\nablait\bm\chi(t,x))}
+\mathsf{\mono}_{\rm ext}=-\xi,
\end{equation}
with boundary conditions vanishing at infinity. Then we have 
\begin{equation}
[-\mathscr E_{\rm tot}(\bm\chi,\cdot)]^*(\bm\chi,\xi)=\int_{\R^d}\frac{\varepsilon_0}{2}|\nabla\upphi_\xi|^2+\frac{\varepsilon_1}{p'}|\nabla\upphi_\xi|^p \d \mathsf x,
\end{equation}
a representation formula which shows the convex character of the dual of $\mathscr E_{\rm tot}(\bm\chi,\cdot)$. It is a now standard result from Convex Analysis that
$$
\upphi=-\partial_\xi\big[-\calE_{\rm tot}(\bm\chi,0)\big]^*,
$$
so that, by substitution into \eqref{abstract-system1}
\begin{align}\label{underlying-ODE+}
{\mathscr T}'\DDT{\bm\chi}+
\partial_{\bm\chi}\calE\Big(\bm\chi\,,\,-\partial_\xi\big[-\calE_{\rm tot}(\bm\chi,0)\big]^*\Big)=\calF_{\rm m}.
\end{align}

\section{Analysis of the model by the Galerkin approximation}\label{sec-mono-anal}
%        ~~~~~~~~~~~~~~~~~~~~~~~~~~~~~~~

An important attribute of the model is that $\det(\nablait\bm\chi)$ occurs in 
the denominators in \eqref{system2}. \COMMENT{This is not exactly true, because in the weak formulation below we eliminate the determinant from the denominator!!} This requires to have control over $1/\det(\nablait\bm\chi)$, which can be 
ensured by having the free energy $\varphi=\varphi(\bm F)$ blowing up to 
$+\infty$ sufficiently fast if $\det\bm F\to0+$. More specifically, together 
with frame indifference, we assume altogether:
\begin{subequations}\label{ass-1}
	\begin{align}
	&\varphi:\R^{d\times d}\to[0,+\infty]\ \ \text{ continuously differentiable on }{\rm SL}^+(d),\label{cont}
	\\
	\label{frame}
	&\forall\bm Q\in{\rm SO}(d):\quad\varphi(\bm Q\bm F)=\varphi(\bm F),
	\\&\label{ass-HK}
	%{\rm det}\bm F>0\ \ \ \Rightarrow\ \ \ 
	\varphi(\bm F)\begin{cases}\ge\epsilon/({\rm det}\bm F)^p\!\!&\text{if 
	}\ {\rm det}\bm F>0,\\
	=+\infty &\text{if 
	}\ {\rm det}\bm F\le0,\end{cases}\ \text{ for some}
	\ \ 
	p>\frac{2d}{d{-}2{-}2\gamma}
	%p>\frac{2d}{d{-}2{-}2\gamma}
	%\frac6{2\gamma{-}1}
	,\ \ \gamma>\frac d 2-1.
	%d/2{-}1,
	%\\&{\rm det}\bm F\le0\ \ \ \Rightarrow\ \ \ \psi(\bm F,\bm m,\zeta,\theta)=\infty,
	%\\&\label{ass-HK+}{\rm det}\bm F\le0\ \ \ \Rightarrow\ \ \ \varphi(\bm F)=+\infty.
	\end{align}\end{subequations}
where $\gamma$ refers to \eqref{ass-kernel-K+}. Concerning the regularizing 
kernel $\mathfrak K$, 
%the integrand 
we assume
\begin{align}\label{ass-kernel-K+}
\!\exists\eps>0\ \forall x,\tilde x\!\in\!\varOmega,\ G\!\in\!\R^{d\times d}:
\quad
\bigg(\frac{\eps|G|^2}{|x{-}\tilde x|^{d+2\gamma}\!\!\!}\ \ -\frac1\eps\bigg)^+\!\!\!
\le G{:}
%\Vdots
\mathfrak{K}(x,\tilde x)
%\Vdots 
{:}G
\le\frac{|G|^2}{\eps|x{-}\tilde x|^{d+2\gamma}\!\!\!}\ .
\end{align}
Furthermore, we shall make the following assumptions concerning the initial data and the body loading and the surface loading
\begin{subequations}\label{eq:56}\begin{align}\label{eq:56-IC}
	&\bm\chi_0\in H^{2+\gamma}(\Omega;\mathbb R^d)\quad\text{with}\quad
	%\mathscr E_{\rm tot}(\bm\chi_0)<\infty
	\varphi(\nablait\bm\chi_0)\in L^1(\varOmega), \qquad 
	\bm v_0\in L^2(\Omega;\mathbb R^d),\qquad 
	\\&\label{eq:56-f-g}
	\bm f\in L^\infty(0,T;L^2(\varOmega;\R^d)),\qquad \bm g\in W^{1,1}(0,T;L^1(\varGamma;\mathbb R^d)).
	\end{align}\end{subequations}

The derivation of the weak formulation of \eqref{system1} is standard, but requires some care. We take the scalar product of both sides with a test velocity $\bm\zeta$ such that $\bm\zeta(T)=\bm 0$ and $\DT{\bm\zeta}(T)=\bm 0$, and we integrate over the domain $\varOmega$ and on the time interval $I$. Then we integrate by parts twice both on the domain $\varOmega$ and on the time interval $I$.  The result is
\begin{align}\nonumber
&\int_Q\varrho{\bm\chi}{\cdot}\DDT{\bm\zeta}
+\varphi'(\nablait\bm\chi):\nablait\bm\zeta+\mathfrak H(\nablait^2\bm\chi)\vdots\nablait^2\bm\zeta
\,\d x\d t
%+\int_Q(\nablait^2\bm\chi(x')-\nablait^2\bm\chi(x))\vdots\mathscr K(x-x')\vdots\nablait^2\bm\zeta(x)\,\d x\d x
\\[-.2em]&\ 
=-\int_Q q(\nabla\upphi\circ\bm\chi)\cdot\bm\zeta\,\d x\d t
+\int_{\varSigma}\bm g\cdot\bm\zeta\,\d S\d t
+\int_{\varOmega}\varrho\bm v_0{\cdot}\bm \zeta(0)-\varrho\bm\chi_0{\cdot}\DT{\bm\zeta}(0)\,\d x.
\end{align}
In our proof of existence of weak solutions, a passage to the limit in the integral term involving $\nabla\upphi$ on the right--hand side would be problematic. Indeed, the term $\nabla\upphi$ is composed with the deformation $\bm\chi$, and in order to pass to the limit in our approximation procedure even strong convergence of $\nabla\upphi$ in a $L^p$ space would not suffice. Indeed, we would need convergence of $\nabla\upphi$ in the space of continuous functions, which however cannot be expected. For this reason, we have to rewrite the aforementioned term through a few manipulations. As a start, we observe that $(\nabla\upphi)\circ\bm\chi=\nablait\bm\chi^{-\top}\nablait(\upphi\circ\bm\chi)$. Thus, using integration by parts we obtain
\begin{align}\nonumber
&\int_Q q(\nabla\upphi\circ\bm\chi)\cdot\bm\zeta\,\d x\d t
=\int_Q q\bm\zeta\cdot\nablait\bm\chi^{-\top}\nablait(\upphi\circ\bm\chi)
=\int_Q \nablait(\upphi\circ\bm\chi)\cdot\nablait\bm\chi^{-1}(q\bm\zeta)=
\\[-.3em]
&\qquad=\int_\Sigma q(\upphi\circ\bm\chi)\nablait\bm\chi^{-\top}\bm n\cdot\bm \zeta
-\int_Q \upphi\circ\bm\chi \operatorname{div}(q\nablait\bm\chi^{-1}\bm\zeta)\,\d x\d t\nonumber\\\nonumber
&\qquad=
\int_\Sigma q(\upphi\circ\bm\chi)\nablait\bm\chi^{-\top}\bm n\cdot\bm \zeta
-\int_Q (\upphi\circ\bm\chi)\nablait\bm\chi^{-\top}\cdot\nabla (q\bm\zeta)\,\d x\d t
\\[-.4em]&\qquad\qquad\qquad\qquad\qquad\qquad\qquad\qquad
-\int_Q q(\upphi\circ\bm\chi)\bm\zeta\cdot\operatorname{div}(\nablait\bm\chi^{-\top})\,\d x\d t.
\end{align}
We can now write our notion of weak solution.

\begin{definition}[Weak solution to the initial-boundary-value
	problem \eq{system}--\eq{BC}--\eq{IC}]\label{def}
	%\COMMENT{TO BE STILL CHANGED}
	A pair $(\bm\chi,\upphi)\in C_{\rm w}(I;H^{2+\gamma}(\varOmega;\R^d))\times 
	L^\infty(I;W^{1,p}(\mathbb R^d))$ is a weak solution to \eq{system}, \eq{BC}, and 
	\eq{IC} if $\mathscr{H}(\nablait^2\bm\chi)\in L^\infty(I)$,
	if $\DT{\bm\chi}\in C_{\rm w}(I;L^2(\varOmega;\R^d))$,
	and if the following conditions hold:
	\medskip
	
	\noindent 1) for every $\bm \zeta\in C^\infty(\overline{Q};\mathbb R^d)$ 
	satisfying $\bm\zeta(T)=\DT{\bm\zeta}(T)=0$,
	\begin{subequations}\label{w-sln}
		\begin{align}\nonumber
		&\int_Q\varrho{\bm\chi}{\cdot}\DDT{\bm\zeta}
		+
		%\partial_{\bm F}
		\varphi'(\nablait\bm \chi)
		{:}\nablait\bm\zeta
		+
		%\bigg(\int_\varOmega(\nablait^2\bm\chi(x')-\nablait^2\bm\chi(x))\vdots\mathscr K(x-x')\,\d x'\bigg)
		\mathfrak{H}(\nablait^2\bm\chi)
		\Vdots\nablait^2\bm\zeta(x)
		\,\d x\d t
		%+\int_Q(\nablait^2\bm\chi(x')-\nablait^2\bm\chi(x))\vdots\mathscr K(x-x')\vdots\nablait^2\bm\zeta(x)\,\d x\d x
		=\int_Q\!(\upphi\circ\bm\chi)\nablait\bm\chi^{-\top}\!:\nabla(q\bm\zeta)
		\\[-.4em]&\ \ 
		+
		\bigg(q(\phi\circ\bm\chi)\operatorname{div}(\nablait\bm\chi^{-\top})+\bm f\bigg)
		{\cdot}\bm\zeta\,\d x\d t
		+\int_{\varSigma}(q(\phi\circ\bm\chi) \nablait\bm\chi^{-\top}\bm n+\bm g)\cdot\bm\zeta\,\d S\d t\nonumber
		% +\int_{\varOmega}\varrho\bm v_0{\cdot}\bm \zeta(0)-\varrho\bm\chi_0{\cdot}\DT{\bm\zeta}(0)\,\d x;
		\\[-.4em]
		& \hspace{18em}\
		+\int_\varOmega \varrho\bm v_0\cdot\bm\zeta(0)-\varrho\bm\chi_0\cdot\DT{\bm\zeta}(0)\d x,
		\label{w-sln1}\end{align}
		with the hyperstress
		$$[\mathfrak{H}(\nablait^2\bm\chi)](t,x)
		=\sum_{i=1}^d\bm e_i\otimes\int_\varOmega\mathfrak K_i(x,x'){:}
		(\nablait^2\chi_i(t,x'){-}\nablait^2\chi_i(t,x))
		%\vdots
		\,\d x'$$ a.e.\ on $Q$, \emph{cf.} \eqref{eq:76} and \eqref{eq:75}.
		
		\medskip
		
		\noindent 2) For every $\zeta\in C_0^\infty(I\times\mathbb R^d)$,
		\begin{align}
		&\int_{I\times{\mathbb R^d}}(\varepsilon_0+\varepsilon_1|\nabla\upphi|^{p-2})\nabla\upphi\cdot\nabla\zeta
		-\mathsf{\mono}_{\rm ext}\zeta\,\d \mathsf x\d t=
		\int_{Q}\!q
		(\zeta{\circ}\bm\chi)\,\d x \d t.
		%+ \int_{I\times\mathbb R^d}\mathsf{\mono}_{\rm ext}\zeta\,\d \mathsf x\d t.
		\label{w-sln3}
		\end{align}\end{subequations}
\end{definition}
If $\varphi$
%was
were 
semiconvex, that is, if $\varphi''$ was bounded from below, 
we could use a standard technique (see for instance \cite[Remark 9.5.4]{KruRou18MMCM}), based on 
the a time discretization to provide a constructive proof of existence of 
solutions. However, in the present case semiconvexity is incompatible with 
the frame-indifference requirement \eqref{frame}. Instead, we will resort 
to the Galerkin method to construct approximate solutions. 
{It is important that the singularity of the free energy for 
	$\det\bm F\to0+$ is eliminated together with the {spurious} 
	Lavrentiev phenomenon (cf.\ \cite{FoHrMi03LGPN}) by the used 
	nonsimple-material concept in cooperation with the Healey-Kr\"omer theorem 
	\cite{HeaKro09IWSS}.}
{It is important that this can be done already on the level of Galerkin 
	approximation, so that no other regularization of the singularity 
	of $\varphi$ at $\det\bm F\to0+$ is not needed,} cf.\ also \cite{RouTom2018}.
%\marginpar{\small\bfIt would be important to explain at what point of the proof this is used.}

To this goal, we take \REPLACE{increasing}{nested} (with respect to 
``$\subset$'') sequences (indexed by $k\in\N$) of some finite-dimensional 
subspaces $X_k$ of $H^{2+\gamma}(\varOmega;\R^d)$  whose union is dense, i.e.:
\begin{equation}\label{nested-X}
X_k\subset X_{k+1},\qquad \overline{\bigcup_k X_k}=H^{2+\gamma}(\varOmega;\R^d).
\end{equation}
Without loss of generality, we may assume that $X_0$ is spanned by the initial configuration $\bm\chi_0\in H^{2+\gamma}(\varOmega;\R^d)$ and
by the initial velocity $\bm v_0\in L^2(\varOmega;\R^d)$. Then for each $k\in\mathbb N$ we solve the following approximation of 
%\eqref{w-sln1}--\eqref{w-sln3}
\eq{w-sln}, which we write in an abstract form, in the same fashion of \eqref{abstract-system}:
\begin{subequations}\label{abstract-system-discrete}
	\begin{align}\label{abstract-system-discrete1}
	&\langle{\mathscr T}'\DDT{\bm\chi}_k+\partial_{\bm\chi}\calE(\bm\chi_k,\upphi_k),\bm v\rangle=\langle\mathscr F_{\rm m},\bm v\rangle\qquad\forall \bm v\in X_k,
	\\
	\label{abstract-system-disc2}
	&\partial_{\upphi}\calE(\bm\chi_k,\upphi_k)-\mathscr F_{\rm e}=0.
	\end{align}\end{subequations}
It should be pointed out that the Galerkin approximation applies 
only to \eq{abstract-system-discrete1} while \eq{abstract-system-disc2} is kept
continuous.
The existence of solutions for the above system can be obtained by following a path similar to that  leading to \eqref{underlying-ODE+}, which leads to
\begin{align}\label{underlying-ODE++}
\left\langle{\mathscr T}'\DDT{\bm\chi}_k+
\partial_{\bm\chi}\calE\Big(\bm\chi_k\,,\,-\partial_\xi\big[-\calE_{\rm tot}(\bm\chi_k,0)\big]^*\Big),\bm v\right\rangle=0\qquad\forall \bm v\in X_k,
\end{align}
which is equivalent to a system of ordinary differential equations whose solution for small times can be proved by standard arguments. When the solution of \eqref{underlying-ODE++} is obtained, we define the approximants $\upphi_k$ as
% \COMMENT{NOW ALSO $\calF_{\rm e}$ TO BE ADDED}
\begin{equation}\label{eq:66}
\upphi_k=-\partial_\xi[-\mathscr E_{\rm tot}(\bm\chi_k,0)].
\end{equation}
The following result by T.J.\,Healey and S.\,Kr\"omer \cite{HeaKro09IWSS} is of 
an essential importance: Formulating it in an arbitrary dimension $d\in\N$, 
let $p>d$, $q\ge pd/(p{-}d)$, and $K\in\R$, then there is 
$\eps=\eps(p,q,K)>0$ such that
%Let $C>0$ be such that 
\begin{align}%\nonumber
%S_c:=\bigg\{
\left.\begin{array}{l}\bm\chi\in W^{2,p}(\varOmega;\R^d)
\\
\det(\nablait\bm\chi)>0\ 
\mathrm{a.e. in}\  \varOmega%\ \text{ and }\ 
\\\|\bm\chi\|_{W^{2,p}(\varOmega;\R^d)}+\!\int_\varOmega%\frac
1/{(\det\nablait\bm\chi(x))^{q}}\,\d x\le K
\end{array}\right\}
%\bigg\}
%\ne\emptyset\ 
%\\
\quad\Rightarrow
%\end{align*}with $c$ given so that $S_c\ne\emptyset$. Then:\begin{align}
\label{Healey-Kromer-bound}
%\exists\,\eps=\eps(p,q,c)>0\ \ \forall\,\bm\chi\in S_c:
\quad
\det(\nablait\bm\chi)\ge\eps\ \ \text{ on }\ \bar\varOmega.
\end{align}
Here we will use it in a modification involving the quadratic form 
$\mathscr{H}$ as in \eq{8-nonlocal-f-hyper} with \eq{ass-kernel-K+} 
satisfied for some $\gamma>d/2-1$ and with a potential 
$\varphi:\R^{d\times d}\to\R$ satisfying $\varphi(\bm F)\ge1/(\det\bm F)^q$ for 
$q>2d/
%(d{-}2{-}2\gamma)
(2\gamma{+}2{-}d)$, cf.\ \cite[Sect.\,2.5]{KruRou18MMCM}:
Considering the functional 
$\Phi(\bm\chi):=\int_\varOmega\varphi(\nablait\bm\chi)\,\d x+\mathscr{H}(\nablait^2\bm\chi)$, for any $K$, there is $\eps>0$ such that
\begin{align}
\Phi(\bm\chi)\le K\qquad\Rightarrow\qquad \det(\nablait\bm\chi)\ge\eps\quad\text{ on $\bar\varOmega$.}
\label{det-positive-on-lev}\end{align}
%\DELETEDELETE{Here we will use it for $d=3$.}

\begin{proposition}[Weak solutions to \eqref{system}]\label{prop-1}
	Assume that $q\in W^{1,1}(\Omega)$, {that 
		$\mathsf q_{\rm ext}\in L^1(\mathbb R^d)$ is time independent,} and that $p>d$.  
	Then the Galerkin approximation $(\bm\chi_k,\upphi_k)$ exists on the whole 
	time interval $I$ and satisfies the a-priori estimates
	\begin{subequations}\label{est-1}
		\begin{align}
		&\|\bm\chi_k\|_{W^{1,\infty}(I;L^2(\varOmega;\R^d))\,\cap\,L^\infty(I;H^{2+\gamma}(\varOmega;\R^d))}\le C
		\ \ \text{ with }\ \ \Big\|\frac1{\det(\nablait\bm\chi_k)}\Big\|_{L^\infty(Q)}^{}\le C,\label{eq:58}
		\\&\|\upphi_k\|_{L^\infty(I;W^{1,p}(\R^d))}^{}\le C.\label{eq:60}
		\end{align}
	\end{subequations}
	Moreover, there is a subsequence of $\{(\bm\chi_k,\upphi_k)\}_{k\in\N}$ 
	converging weakly* in the topologies indicated in \eqref{est-1} and 
	the limit of any such subsequence is the weak solution to 
	\eqref{system} with the initial conditions $\bm\chi(0)=\bm\chi_0$
	and $\DT{\bm\chi}=\bm v_0$.
\end{proposition}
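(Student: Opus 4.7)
My plan is to prove the proposition in four stages: local existence of the Galerkin approximants, uniform a priori bounds, global extendibility, and passage to the limit. For local existence, once the basis of $X_k$ is fixed, the Legendre-type reduction recorded in \eqref{underlying-ODE++} recasts \eqref{abstract-system-discrete} as a second-order ODE on the finite-dimensional $X_k$. Its right-hand side is continuous on the open set where $\det\nablait\bm\chi_k>0$, because the solution map of the strictly monotone $p$-Laplacian problem in \eqref{eq:66} is continuous from $W^{-1,p'}(\R^d)$ into $W^{1,p}(\R^d)$ and its source depends continuously on the coefficients of $\bm\chi_k$. Peano's theorem then yields a local solution on a maximal interval $[0,T_k)\subset I$.

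For the a priori bounds, I would test the Galerkin momentum equation \eqref{abstract-system-discrete1} against $\DT{\bm\chi}_k\in X_k$ and, exploiting the time-independence of $\mathsf q_{\rm ext}$ in \eqref{abstract-system-disc2}, obtain the energy identity
\begin{equation*}
\frac{\d}{\d t}\Big(\mathscr T(\DT{\bm\chi}_k)+\calE(\bm\chi_k,\upphi_k)\Big)=\langle\calF_{\rm m},\DT{\bm\chi}_k\rangle.
\end{equation*}
A Gr\"onwall argument, starting from the initial bound afforded by \eqref{eq:56-IC}, then yields uniform bounds on $\|\DT{\bm\chi}_k\|_{L^\infty(I;L^2)}$, on $\|\nablait^2\bm\chi_k\|_{L^\infty(I;H^\gamma)}$ (via the kernel coercivity \eqref{ass-kernel-K+}), and, by \eqref{ass-HK}, on $\int_\varOmega 1/(\det\nablait\bm\chi_k)^p\,\d x$ uniformly in $t$. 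The Healey-Kr\"omer implication \eqref{det-positive-on-lev} then produces a uniform positive lower bound on $\det\nablait\bm\chi_k$, which excludes degeneracy of the ODE and hence forces $T_k=T$. The $L^\infty(I;W^{1,p}(\R^d))$ bound on $\upphi_k$ follows by inserting $\upphi_k$ itself into the weak form of \eqref{abstract-system-disc2} and using $W^{1,p}(\R^d)\hookrightarrow L^\infty(\R^d)$ (Morrey, since $p>d$) to control the source term.

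For the limit passage, Banach--Alaoglu extracts a subsequence converging weakly-$*$ as stated in the proposition. The compact embedding $H^{2+\gamma}(\varOmega;\R^d)\hookrightarrow C^1(\bar\varOmega;\R^d)$, valid because $\gamma>d/2-1$, together with the Aubin--Lions lemma applied with the uniform $L^\infty(I;L^2)$ bound on $\DT{\bm\chi}_k$, yields strong convergence $\bm\chi_k\to\bm\chi$ in $C(\bar I;C^1(\bar\varOmega;\R^d))$; because $\det\nablait\bm\chi_k\ge\eps>0$ uniformly, also $\nablait\bm\chi_k^{-\top}\to\nablait\bm\chi^{-\top}$ uniformly, and $\varphi'(\nablait\bm\chi_k)\to\varphi'(\nablait\bm\chi)$ uniformly by continuity of $\varphi'$ on the compact set in ${\rm GL}^+(d)$ where the approximations take values. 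The hyperstress term passes to the weak limit by its linearity in $\nablait^2\bm\chi_k$. Uniform Morrey estimates give equicontinuity of $\{\upphi_k\}$ on compact subsets of $\R^d$, so along a further subsequence $\upphi_k\to\upphi$ locally uniformly, which in turn allows passage to the limit in every composition $\upphi_k\circ\bm\chi_k$ appearing in \eqref{w-sln}.

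The main obstacle is identifying the weak limit of the nonlinear flux $(\varepsilon_0+\varepsilon_1|\nabla\upphi_k|^{p-2})\nabla\upphi_k$ in \eqref{w-sln3}. I plan to use Minty's trick: testing \eqref{w-sln3} by $\upphi_k$ itself gives the electrostatic energy identity, whose right-hand side---which in the Lagrangian form $\int_\varOmega q(\upphi_k\circ\bm\chi_k)\,\d x$ of the body-charge term bypasses the set-valued inverse entirely---converges thanks to the strong convergences already established. Comparing with the identity obtained by testing the limit equation against $\upphi$, and invoking the monotonicity of $\xi\mapsto(\varepsilon_0+\varepsilon_1|\xi|^{p-2})\xi$, identifies the flux limit with $(\varepsilon_0+\varepsilon_1|\nabla\upphi|^{p-2})\nabla\upphi$ and completes the verification of \eqref{w-sln3}. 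The initial conditions are inherited because $\bm\chi_0,\bm v_0\in X_0$.
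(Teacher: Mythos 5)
Your plan follows the paper's own route quite closely (Galerkin plus the Legendre-transform reduction \eqref{underlying-ODE++} for local existence, Healey--Kr\"omer via \eqref{det-positive-on-lev} for the determinant bound and continuation, Aubin--Lions compactness, and monotonicity of the $p$-regularized electrostatic operator for the limit in \eqref{w-sln3}; your Minty argument is only a cosmetic variant of the paper's uniform-convexity/strong-monotonicity argument). However, the a priori estimate stage as you state it has a genuine gap. The identity $\frac{\d}{\d t}\big(\mathscr T(\DT{\bm\chi}_k)+\calE(\bm\chi_k,\upphi_k)\big)=\langle\calF_{\rm m},\DT{\bm\chi}_k\rangle$ does not feed directly into Gr\"onwall, for two reasons. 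First, $\calE(\bm\chi_k,\cdot)$ has a saddle structure: $\calE_{\rm elec}$ contains the \emph{negative} Dirichlet terms $-\int_{\R^d}\frac{\varepsilon_0}{2}|\nabla\upphi_k|^2+\frac{\varepsilon_1}{p}|\nabla\upphi_k|^p\,\d\mathsf x$, so the conserved quantity is not obviously bounded below and does not control the mechanical energy. The cure is exactly the test of \eqref{abstract-system-disc2} by $\upphi_k$, which you invoke only afterwards and only to bound $\upphi_k$: as in the paper's identity \eqref{eq:59}, it converts $\calE_{\rm elec}(\bm\chi_k,\upphi_k)-\int_{\R^d}\mathsf q_{\rm ext}\upphi_k\,\d\mathsf x$ into the nonnegative quantity $\int_{\R^d}\frac{\varepsilon_0}{2}|\nabla\upphi_k|^2+\frac{\varepsilon_1}{p'}|\nabla\upphi_k|^p\,\d\mathsf x$, and only after this substitution does Gr\"onwall yield the bounds on $\DT{\bm\chi}_k$, $\bm\chi_k$, $1/\det\nablait\bm\chi_k$ and, simultaneously, \eqref{eq:60}. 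Second, $\langle\calF_{\rm m},\DT{\bm\chi}_k\rangle$ contains the surface term $\int_\varGamma\bm g\cdot\DT{\bm\chi}_k\,\d S$, and the energy gives no control of the boundary trace of the velocity, so this term cannot be absorbed by Gr\"onwall as written; the paper integrates it by parts in time (its \eqref{eq:61} and \eqref{eq:71}), using $\bm g\in W^{1,1}(0,T;L^1(\varGamma;\R^d))$ and $H^{2+\gamma}(\varOmega)\subset C(\bar\varOmega)$ --- this is precisely why that hypothesis appears in \eqref{eq:56-f-g}. Your proposal omits this step, and without it the estimate does not close.

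Two smaller points. Your claim that ``along a further subsequence $\upphi_k\to\upphi$ locally uniformly'' is delicate, because the $L^\infty(I;W^{1,p}(\R^d))$ bound only holds for a.e.\ $t$ and an Arzel\`a--Ascoli extraction at each $t$ would produce $t$-dependent subsequences; the clean argument (the paper's) is that for a.a.\ $t$ the right-hand side of the electrostatic equation converges strongly in $W^{1,p}(\R^d)^*$ thanks to $\bm\chi_k(t)\to\bm\chi(t)$ in $C^1(\bar\varOmega;\R^d)$, and strict monotonicity then forces convergence of $\upphi_k(t)$ itself, with no further extraction, which is also what you need to pass to the limit in the compositions $\upphi_k\circ\bm\chi_k$ in the momentum equation. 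Finally, note that the limit passage in the momentum equation must be carried out in the rewritten weak form \eqref{w-sln1}, where $q\,(\nabla\upphi\circ\bm\chi)$ has been integrated by parts into terms involving $\upphi\circ\bm\chi$, $\nablait\bm\chi^{-\top}$ and $\operatorname{div}(\nablait\bm\chi^{-\top})=\big[\frac{\Cof}{\det}\big]'(\nablait\bm\chi)\Vdots\nablait^2\bm\chi$ (this is where $q\in W^{1,1}(\varOmega)$ enters); your proposal mentions the compositions but not this structure, which is what makes weak convergence of $\nablait^2\bm\chi_k$ sufficient.
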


\begin{proof}
	By the Healey-Kr\"omer theorem \cite{HeaKro09IWSS}
	in its modification as \eqref{det-positive-on-lev}, for some positive 
	$\varepsilon\le\min_{x\in\bar{\varOmega}}\det\nablait\bm\chi_0(x)$, we have
	\begin{align}\label{pos-of-det}
	\forall (t,x)\in\bar Q:\qquad\det\nablait\bm\chi_k(t,x)\ge\varepsilon.
	\end{align} 
	%Actually, here we use a modification of the original result 
	%\cite{HeaKro09IWSS}, relying on the embedding of 
	%$H^{2+\gamma}(\varOmega)\subset W^{2,.........}(\varOmega)$,
	%cf.~\cite[Sect.\,2.5]{KruRou18MMCM}.\COMMENT{LIKELY ALREADY WILL BE ABOVE}
	For \eqref{pos-of-det}, we use
	the successive-continuation argument on the Galerkin level, and
	thus $\nablait\bm\chi_k$ is valued in the definition domain $\varphi$ and 
	the singularity of $\varphi$ is not seen. In particular, the 
	Lavrentiev phenomenon (which may occur when $\varphi$ would not 
	have enough fast growth to $+\infty$ if $\det\bm F\to0+$)
	is excluded. 
	
	Having a local solution of the Galerkin approximation, we test (\ref{abstract-system-discrete1}) and \eqref{abstract-system-disc2} by $\DT{\bm\chi}_k$ and $\DT{\upphi}_k$, respectively, we add the resulting equations and we use the chain 
	rule to obtain
	\begin{align}\nonumber
	&\int_\varOmega \frac\varrho 2 |\DT{\bm\chi}_k(\tau)|^2\d x
	+\mathscr E_{\rm mech}(\bm\chi_k(\tau))
	+\mathscr E_{\rm elec}(\bm\chi_k(\tau),\upphi_k(\tau))
	=\int_\varOmega \frac\varrho 2 |\DT{\bm v}_0|^2\d x
	\\
	&\quad+\mathscr E(\bm\chi_0,\upphi_0)
	+\int_0^\tau\!\!\int_\varOmega\bm f\cdot\DT{\bm\chi}_k\d x\d t\nonumber
	+\int_0^\tau\!\!\int_\varGamma{\bm g}\cdot\DT{\bm\chi}_k\d S\d t
	+\int_0^\tau\!\!\int_{\R^d}{\mathsf q}_{\rm ext}\DT\upphi_k\d\mathsf x \d t 
	\end{align}
	for all $\tau$ in some interval $(0,T_k)$ with $T_k\le T$. \COMMENT{$\tau$ BETTER $t$???} Then, we integrate by parts with respect to time the terms containing time derivatives of $\bm\chi_k$ and $\upphi_k$ that cannot be controlled by the energetic terms, namely, the terms involving $\bm g$ and $\mathsf q_{\rm ext}$, and we obtain 
	\begin{align}\label{eq:61}
	&\int_\varOmega \frac\varrho 2 |\DT{\bm\chi}_k(\tau)|^2\d x+\mathscr E_{\rm mech}(\bm\chi_k(\tau))-\int_\varSigma\bm g(\tau)\cdot\bm\chi_k(\tau)\, \d S
	+\mathscr E_{\rm elec}(\bm\chi_k(\tau),\upphi_k(\tau))\nonumber
	\\&\quad-\int_{\R^d}\mathsf q_{\rm ext}(\tau)\upphi_k(\tau)\d \mathsf x=\int_\varOmega \frac\varrho 2 |\DT{\bm v}_0|^2\d x+\mathscr E(\bm\chi_0,\upphi_0)
	+\int_0^\tau\!\!\int_\varOmega\bm f\cdot\DT{\bm\chi}_k\d x\d t\nonumber
	%-\int_0^\tau\!\!\int_{\R^d}\DT{\mathsf q}_{\rm ext}\upphi_k\d\mathsf x \d t\nonumber
	\\
	&\qquad -\int_0^\tau\!\!\int_\varGamma\DT{\bm g}\cdot{\bm\chi}_k\d S\d t
	-\int_\varSigma\bm g(0)\cdot\bm\chi_k(0)\, \d S
	-\int_{\R^d}\mathsf q_{\rm ext}(0)\upphi_k(0)\d \mathsf x.
	\end{align}
	We observe that, for every $\delta>0$,
	\begin{equation*}
	\begin{aligned}
	&\!\!\!\!\!\mathscr E_{\rm mech}(\bm\chi_k(\tau))-\int_\varGamma \bm g(\tau)\cdot\bm\chi_k(\tau)
	\\[-.4em]&
	\ge C_1 \|\nablait^2\bm\chi_k(\tau)\|_{L^2(\varOmega;\R^{d\times d\times d})}-\frac 1{2\delta} \|\bm g(\tau)\|_{L^1(\varGamma;\R^d)}^2-\frac \delta 2 \|\bm\chi_k(\tau)\|_{L^\infty(\varOmega;\R^d)}^2\\
	&\ge  C_1 \|\bm\chi_k(\tau)\|^2_{H^{2+\gamma}(\varOmega;\R^d)}-C_2\|\bm\chi_k(\tau)\|^2_{L^2(\varOmega;\R^d)} 
	\\&\qquad\qquad\qquad\qquad\qquad\qquad\qquad
	-\frac 1{2\delta} \|\bm g(\tau)\|_{L^1(\varGamma;\R^d)}^2-\frac \delta 2 \|\bm\chi_k(\tau)\|_{L^\infty(\varOmega;R^d)}^2.
	\end{aligned}
	\end{equation*}
	From the above equation, by using the embedding $H^{2+\gamma}(\varOmega)\subset C(\overline\varOmega)$, and by taking $\delta$ sufficiently small, we obtain
	\begin{equation}\label{eq:69}
	\mathscr E_{\rm mech}(\bm\chi_k(\tau))-\int_\varGamma \bm g(\tau)\cdot\bm\chi_k(\tau)\,\d S\ge C_1 \|\bm\chi_k\|^2_{H^{2+\gamma}(\varOmega;\R^d)}-C_2\Big(\|\bm\chi_k\|^2_{L^2(\varOmega;\R^d)}+1\Big).
	\end{equation}
	
	Next, we notice that if $\upphi_k(t)$ solves the nonlinear electrostatic equation \eqref{abstract-system-disc2} at time $t$, then a test by $\upphi_k(t)$ yields
	\begin{equation}
	\int_{\varOmega} q\upphi_k(t)\circ\bm\chi_k(t)\d x
	+\int_{\R^d}\mathsf q_{\rm ext}(t)\upphi_k(t) \d \mathsf x
	=\int_{\R^d}\varepsilon_0|\nabla\upphi_k(t)|^2
	+\varepsilon_1|\nabla\upphi_k(t)|^p \d \mathsf x.
	\end{equation}
	This implies that
	\begin{equation}\label{eq:59}
	\mathscr E_{\rm elec}(\bm\chi_k(t),\upphi_k(t))-\int_{\R^d}\mathsf q_{\rm ext}(\tau)\upphi_k(\tau)\d \mathsf x
	=\int_{\R^d}\frac {\varepsilon_0}2 |\nabla\upphi_k(t)|^2
	+\frac {\varepsilon_1}{p'}|\nabla\upphi_k(t)|^p \d \mathsf x.
	\end{equation}
	Furthermore, we have
	\begin{align}\label{eq:70}
	& \int_0^\tau\int_\varOmega\bm f\cdot\DT{\bm\chi}_k\,\d x \d t\le \|\bm f\|_{L^\infty(I;L^2(\varOmega;\R^d))}\int_0^\tau \|\DT{\bm\chi}_k\|_{L^2(\varOmega;\R^d)}\, \d t\nonumber\\&\le \|\bm f\|_{L^\infty(I;L^2(\varOmega;\R^d))}+\frac \tau 2+\frac 12 \int_0^\tau\|\DT{\bm\chi}_k\|^2_{L^2(\varOmega;\R^d)}\,\d t
	\le C\bigg(1+\int_0^\tau \|\DT{\bm\chi}_k\|^2_{L^2(\varOmega;\R^d)}\bigg).
	\end{align}
	Likewise, we have
	%  \begin{align}\label{eq:68}
	%    -\int_0^\tau\!\!\int_{\R^d}\DT{\mathsf q}_{\rm ext}\upphi_k\d\mathsf x \d t&\le \int_0^\tau\|\DT{\mathsf q}_{\rm ext}\|_{L^1(\R^d)}\|\upphi_k\|_{L^\infty(\R^d)}\, d t
	%\nonumber                                                                       \\&
	%\le \|\DT{\mathsf q}_{\rm ext}\|_{L^\infty(I;L^1(\R^d))}+\frac \tau {p'}+\frac 1 p \int_0^\tau \|\upphi_k\|^p_{L^\infty(\R^d)}\, \d t
	%      \le C\bigg(1+\int_0^\tau \|\upphi_k\|^p_{W^{1,p}(\R^d)}\, \d t\bigg),
	%  \end{align}
	%  and
	\begin{align}\label{eq:71}
	& -\int_0^\tau\!\!\int_\varGamma\DT{\bm g}\cdot{\bm\chi}_k\d S\d t\le \int_0^\tau\|\DT{\bm g}\|_{L^1(\varGamma;\R^d)}\|{\bm\chi}_k\|_{L^\infty(\varGamma;\R^d)}\d t \nonumber\\
	&\le \frac 12 \int_0^\tau\|\DT{\bm g}\|_{L^1(\varGamma;\R^d)}^2 \,\d t +\frac 12 \int_0^\tau\|{\bm\chi_k}\|^2_{L^\infty(\varGamma;\R^d)} \,\d t
	\le C\bigg(1+\int_0^\tau \|\bm\chi_k\|^2_{H^{2+\gamma}(\varOmega;\R^d))}\bigg).
	\end{align}
	By combining the estimates \eqref{eq:59}, \eqref{eq:70}, and \eqref{eq:71} we obtain
	%\COMMENT{$\|\bm\chi_k(t)\|^2_{H^\gamma(\varOmega;\R^{d\times d\times d})}$ TO BE STILL CORRECTED!!!!}
	\begin{align*}
	\|\DT{\bm\chi}_k(\tau)\|^2_{L^2(\varOmega;\R^d)}&+\|\bm\chi_k(\tau)\|^2_{H^{2+\gamma}(\varOmega;\R^{d})}+\|\upphi_k(\tau)\|^p_{W^{1,p}(\R^d)}\\
	&\le C\bigg(1+\|\bm\chi_k(\tau)\|^2_{L^2(\varOmega;\R^d)}+\int_0^\tau\|\DT{\bm\chi}_k(t)\|_{L^2(\varOmega;\R^d)}
	%       +\|\bm\chi_k(t)\|^2_{H^\gamma(\varOmega;\R^{d})}+ \|\upphi_k(t)\|^p_{W^{1,p}(\R^d)}\d t
	\bigg).
	\end{align*}
	Finally, we observe that
	\begin{align}\nonumber
	\|\bm\chi_k(\tau)\|^2_{L^2(\varOmega;\R^d)}&=\int_\varOmega \Big|\bm\chi_0+\int_0^\tau \DT{\bm\chi}_k(t)\dt\Big|^2\,\d x
	\\&
	\le 2 \int_\varOmega |\bm\chi_0|^2 \, d x+2\int_\varOmega\Big|\int_0^\tau \DT{\bm\chi}_k(t)\dt\Big|^2\,\d x\nonumber\\
	&
	\le 2 \int_\varOmega |\bm\chi_0|^2 \, d x+2\tau\int_\varOmega\int_0^\tau \big|\DT{\bm\chi}_k(t)\big|^2\dt\,\d x.
	\end{align}
	Thus, we can estimate
	\begin{align}\nonumber
	\|\DT{\bm\chi}_k(\tau)\|^2_{L^2(\varOmega;\R^d)}&+\|\bm\chi_k(\tau)\|^2_{H^{2+\gamma}(\varOmega;\R^{d})}+\|\upphi_k(\tau)\|^p_{W^{1,p}(\R^d)}\\
	&\le C\bigg(1+\int_0^\tau\|\DT{\bm\chi}_k(t)\|_{L^2(\varOmega;\R^d)}+\|\bm\chi_k(t)\|^2_{H^{2+\gamma}(\varOmega;\R^{d})}
	% + \|\upphi_k(t)\|^p_{W^{1,p}(\R^d)}\d t
	\bigg).\label{end-of-est}
	\end{align}
	At this point, by the application of Gronwall's inequality and by the standard continuation argument we can deduce the existence of approximate solutions on the whole time interval $I$, along with the bounds \eqref{eq:58} and \eqref{eq:60}.
	%\COMMENT{HERE \eqref{system-est} + ALL ARGUMENTATION AROUND (Gronwall etc)}
	%\COMMENT{PROBABLY THE ASSUMPTION ON THE TIME DIFFERENTIABILITY OF $\bm g$ IS SUBOPTIMAL, AND IN FACT YOUR DERIVATION DOES NOT REQUIRE TIME DIFFERENTIABILITY OF $\bm g$. HOWEVER, I DID NOT WANT TO MAKE TOO MANY CHANGES. -- NOT TRUE!!!! 
	%SEE THE SENTENCE: ``The term $\int_\Gamma g\cdot\DT{\bm\chi}_k\,\d S$ in 
	%\eq{system-est} is to
	%be treated by the by-part integration when  \eq{system-est} is integrated 
	%in time over the interval $[0,t]$.'' BELOW \eq{system-est}. PERHAPS IT SHOULD
	%BE WRITTEN MORE IN DETAIL, BUT IT IS RELATIVELY CLEAR HOW TO MAKE IT.}

	By Banach's selection principle, there exist $\bm\chi\in L^\infty(I;H^{2+\gamma}(\varOmega;\mathbb R^d))\cap W^{1,\infty}(I;L^2(\varOmega;\mathbb R^d))$ and $\upphi\in L^\infty(I;W^{1,p}(\R^d))$ such that, by possibly extracting a subsequence (which we do not relabel),
	\begin{subequations}\label{eq:62}
		\begin{align}
		&\bm\chi_k\to\bm\chi \qquad\text{weakly* in }L^\infty(I;H^{2+\gamma}(\varOmega;\mathbb R^d))\cap W^{1,\infty}(I;L^2(\varOmega;\mathbb R^d)),\label{eq:62a}\\
		&\upphi_k\to\upphi \qquad\text{weakly* in }L^\infty(I;W^{1,p}(\varOmega)).\label{eq:62b}
		\end{align}
	\end{subequations}
	\DELETEDELETE{The last step of the proof is the limit passage in the definition of weak 
		solution, see \eq{}.}
	
	%\eqref{w-sln1}--\eqref{w-sln3}. 
	By the compact embedding $H^{2+\gamma}(\varOmega)\subseteq C^1(\overline\varOmega)$ and by the continuous embedding $C^1(\overline\varOmega)\subset L^2(\overline\varOmega)$, the Aubin-Lions theorem and \eqref{eq:62a} imply that
	\begin{equation}\label{eq:63}
	\bm\chi_k\to \bm\chi\quad\text{strongly in }L^r(I;C^1(\overline\varOmega;\R^{d}))\qquad\forall 1\le r<\infty.
	\end{equation}
	This also implies that the restriction of $\nablait\bm\chi_k$ on $\varGamma$ converges strongly in $L^r(I;C(\varGamma;\R^{d\times d}))$. Consider the function $\bm F\mapsto\bm F^{-\top}=\operatorname{Cof}\bm F/\det\bm F$ on the set of all \REPLACE{$d$-by-$d$ }{$d{\times}d$-}matrices whose determinant is bounded from below by a positive constant. This function induces a continuous map from $C(\overline\varOmega;\R^{d\times d})$ to itself. Hence, from \eqref{eq:63} we have
	\begin{equation}
	\nablait\bm\chi_k^{-\top}\to\nablait\bm\chi^{-\top}\quad\text{strongly in }L^r(I;C(\overline\varOmega;\R^{d\times d}))\qquad\forall 1\le r<\infty.
	\end{equation}
	Next, it follows from the $W^{1,\infty}$-estimate in \eqref{eq:58} that 
	\begin{equation}
	\|\nablait^2\DT{\bm\chi}_k\|_{L^\infty(I;H^2(\varOmega;\R^{d\times d\times d})^*)}\le C.
	\end{equation}
	Also, we have that $H^{\gamma}(\varOmega)$ is compactly embedded in 
	$L^2(\varOmega)$. Thus, in view of the $L^\infty$ estimate in \eqref{eq:58} we 
	can again apply the Aubin-Lions theorem to $\nablait^2\bm\chi$ to deduce that
	\begin{equation}
	\nablait^2\bm\chi_k\to\nablait^2\bm\chi\qquad\text{strongly in }L^r(I;L^2(\varOmega;\R^{d\times d\times d}))\qquad\forall 1\le r<\infty.
	\end{equation}
	The strong convergence statement \eqref{eq:63} can be written as $\int_0^T \|\bm\chi_k(t)-\bm\chi(t)\|^r_{C^1(\overline\varOmega;\R^d)}\to 0$ as $k\to\infty$. This also implies that 
	\begin{equation}\label{eq:72}
	\bm\chi_k(t)\to\bm\chi(t)\qquad\text{strongly in }C^1(\overline\varOmega;\R^d)\text{ for a.a. }t\in(0,T).
	\end{equation}
	We also know from the uniform estimate \eqref{eq:60} that for each $k$ there exists a set $I_k$ such that $\text{meas}(I\setminus I_k)=0$ and such that  $\|\upphi_k(t)\|_{W^{1,p}(\R^d)}\le C$ for all $t\in I_k$. We let $J=\cap_k I_k$. Then $\text{meas}(I\setminus J)=0$, and 
	$\|\upphi_k(t)\|_{W^{1,p}(\R^d)}\le C$ for all $t\in J$, for all indices $k$. Now, by passing to a subsequence, we have that for every closed ball $B\subset\R^d$
	\begin{equation}\label{eq:74}
	\upphi_k(t)\to\upphi(t)\qquad\text{strongly in }C(B) \text{ for a.a. }t\in(0,T).
	\end{equation}
	The functions $\upphi_k$ are solution of the weak equation
	\begin{align}\label{eq:67}
	&\int_{{\mathbb R^d}}(\varepsilon_0+\varepsilon_1|\nabla\upphi_k(t)|^{p-2})\nabla\upphi_k(t)\cdot\nabla\zeta\,\d \mathsf x
	=\int_{\R^d}\mathsf{\mono}_{\rm ext}\zeta+\int_\varOmega q \zeta\circ\bm\chi_k(t)\,\d \mathsf x
	\end{align}
	for all $\zeta\in W^{1,p}(\R^d)$. Since the right-hand side converges strongly in $W^{1,p}(\R^d)^*$, and since the left-hand side has a uniformly convex potential, it is standard to conclude that $\upphi_k$ converges strongly to $\upphi$ and that $\upphi$ solves
	\begin{align}\label{eq:77}
	&\int_{{\mathbb R^d}}(\varepsilon_0+\varepsilon_1|\nabla\upphi(t)|^{p-2})\nabla\upphi(t)\cdot\nabla\zeta\,\d \mathsf x
	=\int_{\R^d}\mathsf{\mono}_{\rm ext}\zeta+\int_\varOmega q \zeta\circ\bm\chi(t)\,\d \mathsf x.
	\end{align}
	By Fubini's theorem, we can further integrate with respect to $t$ to obtain \eqref{w-sln3}.

	The scalar--valued functions 
	$t\mapsto\|\nablait^2\bm\chi_k(t)-\nablait^2\bm\chi(t)\|_{L^2(\varOmega;\R^{d\times d\times d})}$,
	$t\mapsto \|\nablait\bm\chi_k^{-\top}(t)-\nablait\bm\chi^{-\top}(t)\|_{C(\overline\varOmega;\R^{d\times d})}$, and $t\mapsto \|\nablait\bm\chi_k^{-\top}(t)-\nablait\bm\chi^{-\top}(t)\|_{C(\overline\varOmega;\R^{d\times d})}$ converge to zero strongly in $L^r(I)$, and hence they also converge almost everywhere in $I$, that is,
	\begin{subequations}
		\begin{alignat}{3}
		&\nablait^2\bm\chi_k(t)\to\nablait^2\bm\chi(t)&&\quad\text{ strongly in }L^2(\varOmega;\mathbb R^{d\times d\times d})&&\quad\text{ for a.a. }t\in I,\\
		&\nablait\bm\chi_k(t)\to\nablait\bm\chi(t)&&\quad\text{ strongly in }C(\overline\varOmega;\mathbb R^{d\times d})&&\quad\text{ for a.a. }t\in I,\\
		&\nablait\bm\chi_k^{-\top}(t)\to\nablait\bm\chi^{-\top}(t)&&\quad\text{ strongly in }C(\overline\varOmega;\mathbb R^{d\times d})&&\quad\text{ for a.a. }t\in I,\\
		&\nablait\bm\chi_k(t)\to\nablait\bm\chi(t)&&\quad\text{ strongly in }C(\varGamma;\mathbb R^{d\times d})&&\quad\text{ for a.a. }t\in I.
		\end{alignat}
	\end{subequations}
	We
	now examine the limit passage in the various terms in the approximate form 
	of \eqref{w-sln1}. The limit passage in the first and third terms of the 
	left--hand side of \eqref{w-sln1} are immediate since the deformation appears 
	linearly, and weak convergence suffices. The limit passage in the second term 
	on the left--hand side of \eqref{w-sln1} follows from the continuity 
	properties $\varphi'$ that ensue from the qualification \eqref{cont} of 
	$\varphi$, from the convergence \eqref{eq:63}, and from Lebesgue 
	dominated-convergence theorem. {Here it is important that, even 
		on the Galerkin level, due to} \cite[Theorem 3.1]{HeaKro09IWSS},
	{we are uniformly on a sufficiently big level set of the energy 
		$\int_{\varOmega}\varphi(\nablait\bm\chi)\,\d x+\mathscr{H}(\nablait^2\bm\chi)$ 
		so that we are also uniformly away of the singularity of $\varphi$ at 
		$\det\bm F=0$, so $\varphi'({\bm\chi}_k)$ is even uniformly bounded.}
	
	We now focus our attention on the right--hand 
	side of \eqref{w-sln1}. For the first term, we have
	\begin{subequations}\label{weak-conv}
		\begin{align}\label{weak-conv1}
		&\!\int_\varOmega\!\upphi_k(t){\circ}\bm\chi_k(t)\nablait \bm\chi_k^{-\top}(t):\nablait(q\bm\zeta(t))\,\d x
		\to\!\int_\varOmega\!\upphi(t){\circ}\bm\chi(t)\nablait \bm\chi^{-\top}(t):\nablait(q\bm\zeta(t))\,\d x\,.
		\end{align}
		For the second term, we observe that, since $\nabla\bm\chi^{-T}=\big[\frac{\Cof}{\det}\big](\nabla\bm\chi)$, we have, 
		$$\operatorname{div}(\nabla\bm\chi^{-\top})=\bigg[\frac{\Cof}{\det}\bigg]'(\nablait \bm\chi)\Vdots\nablait^2\bm\chi,$$
		and hence, by the strong convergence of $\upphi_k(t)\circ\bm\chi_k(t)$ and $\nablait\bm\chi_k(t)$, and the weak convergence of $\nablait^2\bm\chi$, we have 
		\begin{align}
		&\hspace*{-3em}\int_\varOmega q \upphi_k(t){\circ}\bm\chi_k(t)\operatorname{div}(\nablait\bm\chi_k^{-\top}(t))\cdot\bm\zeta(t)\,\d x
		\nonumber
		\\[-.4em]
		=&\int_\varOmega q\upphi_k(t){\circ}\bm\chi_k(t)\bigg[\frac{\Cof}{\det}\bigg]'(\nablait \bm\chi_k)\Vdots\nablait^2\bm\chi_k(t)\cdot\bm\zeta(t)\,\d x\nonumber
		\\[-.5em]
		&\qquad\to
		\int_\Omega q\upphi(t){\circ}\bm\chi(t))\bm\zeta(t)\cdot\bigg[\frac{\Cof}{\det}\bigg]'(\nablait \bm\chi(t))
		\Vdots\nablait^2\bm\chi_k(t)\,\d x
		\nonumber\\[-.5em]&\qquad\qquad=
		\int_\varOmega q\upphi(t){\circ}\bm\chi(t)\operatorname{div}(\nablait\bm\chi^{-\top}(t))\cdot\bm\zeta(t)\,\d x.
		\end{align}
		Finally,
		\begin{align}                                                      
		\label{weak-conv3}
		\!\int_\Gamma\!q\upphi_k(t){\circ}\bm\chi_k(t)\nablait \bm\chi_k^{-\top}(t)\bm n\cdot\bm\zeta\,\d S
		\to\!\int_\Gamma\!q\upphi(t){\circ}\bm\chi(t)\nablait \bm\chi^{-\top}(t)\bm n\cdot\bm\zeta\,\d S
		\end{align}\end{subequations}
	for a.a.\ $t\in I$. Note that, in \eq{weak-conv1}, 
	we used that $\nablait\mono\in L^1(\Omega;\R^d)$ while, in
	\eq{weak-conv3}, we used that $\mono|_\Gamma^{}\in L^1(\Sigma)$
	if $\mono\in W^{1,1}(\Omega)$, as assumed. Then we can integrate 
	\eq{weak-conv} over $I$, and prove the convergence again
	by using Lebesgue dominated-convergence theorem, relying on a common majorant \COMMENT{This argument should be slightly expanded.}
	which is even constant due to the $L^\infty(I)$-estimates at disposal.
	The convergence in the approximate form of \eq{w-sln1} towards its limit 
	is then proved.
\end{proof}

\section{Flow of a charged diffusant
	%Diffusion of charges
	$\mono$
	%by Cahn-Hilliard system
}\label{sec-diffusion}
%        ~~~~~~~~~~~~~~~~~~~~~~~~~~~~~~~~~~~~~~~~~~~~

Considering the charge density not fixed with the elastic material
\color{black}but rather with a diffusant that can move throughout
the poroelastic medium \color{black}
(so that the field $\mono$ becomes an additional unknown) and 
then the stored energy $\varphi=\varphi(\nablait\bm\chi,\mono)$, we 
can consider $\calE=\calE(\bm\chi,\mono,\upphi)$
again defined by \eqref{static-gravity-functional+}, i.e.\ 
%but now enhanced by the gradient ``capillarity'' term $\frac12\kappa|\nabla\mono|^2$. Altogether, the enhanced energy is
\begin{align}\nonumber
\calE(\bm\chi,\upphi,\mono)=&
\int_\varOmega\!\varphi(\nablait\bm\chi,\mono)+\mono\upphi\circ\bm\chi
%+\frac12\kappa|\nabla\mono|^2
\,\d x
\\[-.4em]&\qquad
+\int_{\R^d}\mathsf{\mono}_{\rm ext}\upphi
-\frac {\varepsilon_0}2 \big|\nabla\upphi\big|^2
-\frac {\varepsilon_1}p\big|\nabla\upphi\big|^p\,\d\mathsf x
+\mathscr{H}(\nablait^2\bm\chi)\,.
%-\int_\varGamma\bm g\cdot\bm\chi\d S
\label{enhanced-functional+}
\end{align}
Then $\mu:=\partial_\mono\calE(\bm\chi,\mono,\upphi)$
is in the position of an
%(electro)chemical potential (the equivalent of pressure in standard poroelasticity).
{\it electrochemical potential}.
In fact, we 
admit $\varphi$ nonsmooth at $\mono=0$, we rather write it as an inclusion, i.e.
%\COMMENT{THE SIGN IN $\upphi$ TO CHECK STILL}
\begin{subequations}\label{diffusion}
	\begin{align}\label{chem-pot}
	&\mu%:=\partial_\mono\calE(\bm\chi,\mono,\upphi)
	%+\tau_{_{\rm R}}\DT\mono
	\in\partial_\mono\varphi(\nablait\bm\chi,\mono)+\upphi(\bm\chi)\,.
	%-{\rm div}(\kappa\nablait\mono)
	%+\tau_{_{\rm R}}\DT\mono
	\intertext{%is in the position of (electro)chemical potential.
		% augmented by the rate term with $\tau_{_{\rm R}}\ge0$ a relaxation time. 
		The system can now be expanded also by the 
		mass-balance equation written in the reference configuration together
		with the boundary conditions}
	&\label{diffusion+}
	\DT\mono-{\rm div}(\bm M(x,\nablait\bm\chi,\mono)\nablait\mu)=0\qquad\ \ \ \ \ \, 
	\text{ in }\ \varOmega \ \text{ (at a given time $t$)},
	\\
	&%\bm M(x,\nablait\bm\chi)\nablait\mu
	(\bm M(x,\nablait\bm\chi,\mono)\nablait\mu)\cdot n+\alpha\mu=\alpha\mu_\flat\quad
	%\text{ and }\quad(\kappa\nablait\mono)\cdot n=0
	\quad\text{ on }\ \varGamma \ \text{ (at a given time $t$)},
	\end{align}
	where $\bm M=\bm M(x,\nablait\bm\chi,\mono)$ is the mobility tensor, 
	%(meaning the diffusivity),
	% or the electric conductivity tensor
	$\alpha$ is a permeability coefficient of the boundary, $n$ denotes the unit 
	outward normal to the boundary $\varGamma=\partial\varOmega$,
	and $\mu_\flat$ is
	a prescribed external electro-chemical potential. 
	%The system (\ref{diffusion}a,b) is referred as the Cahn-Hilliard
	%equations \cite{CahnH1958JCP}. %\COMMENT{HERE SOME REFS}
	Under the assumption that 
	the mobility tensor in the current configuration, namely 
	$\mathsf M:\Omega\to\R^{d\times d}$ does not depend on $\bm F$, its pullback is
	%\COMMENT{Pushback does not exist: either pushforward or pullback!}
	\begin{align}\label{M-pullback}
	\bm M(x,\bm F,\mono)=\frac{(\operatorname{Cof}\bm F)^\top\bm{\mathsf M}(x,\mono)\operatorname{Cof}\bm F}{\det\bm F}\qquad\text{ with }\ x\in\varOmega,
	\end{align}\end{subequations}
where $\operatorname{Cof} \bm F=(\det \bm F)\bm F^{-\top}$ is the cofactor of 
$\bm F$. Here $\bm{\mathsf M}=\bm{\mathsf M}(x,\mono)$ is the 
material mobility tensor.
In literature, this formula is often used in
%Actually, a reasonable sense of this formula is rather for 
the isotropic case $\bm{\mathsf M}(x,\mono)=m\mono{\mathbb I}$ where 
\eqref{M-pullback} can easily be written by using the right Cauchy-Green 
tensor $C=F^\top F$ as $\det(C^{1/2})k(\mono)C^{-1}=(\det(F^\top)\det(F))^{1/2}
m(\mono)(F^\top F)^{-1}=\det(F)F^{-1}(k(\mono){\mathbb I})F^{-\top}$,
cf.\ e.g.\ \cite[Formula (67)]{DuSoFi10TSMF} or
\cite[Formula (3.19)]{GovSim93CSD2}. For a general case, see 
\cite[Sect.\,9.1]{KruRou18MMCM}.

%Referring to \eqref{options}, $M$ can mean 
%either (a matrix of) diffusivity
% or electric conductivity coefficients. 
%If $\tau_{_{\rm R}}>0$, \eqref{diffusion} is called the viscous Allen-Cahn 
%equation, while $\tau_{_{\rm R}}>0$ is just usual Allen-Cahn equation. 
%If also a ``capillarity-like'' 
%gradient term $\frac12\kappa_0|\nabla\mono|^2$ would 
%be involved so that the first integral in \eqref{static-gravity-functional+} would take the form
%$\int_\varOmega\!\varphi(\nablait\bm\chi,\mono)+\frac12\kappa_0|\nabla\mono|^2\,\d x$, then
%\eqref{chem-pot} would involve the term $-{\rm div}(\kappa_0\nabla\mono)$ and 
%the system \eqref{diffusion} is called (viscous) Cahn-Hilliard equation.
Such system \eqref{system} with \eqref{diffusion} is no longer conservative as 
diffusive processes are dissipative. The dissipation rate is 
$2{\mathscr R}(\bm\chi,\mono,\mu)=\int_\varOmega(\nabla\mu)^\top\bm M(x,\nablait\bm\chi,\mono)
\nabla\mu
%+\tau_{_{\rm R}}\DT\mono^2
\,\d x
+\int_{\partial\varOmega}\alpha\mu^2\,\d S$.
% provided \eqref{diffusion+} is 
%completed by the boundary condition 
%$({\mathsf M}\nablait\mono)\cdot n+\alpha\mono=0$ with $n$ denoting the unit 
%outward normal to the boundary $\partial\varOmega$.
Considering for a moment $\mu_\flat=0$,
analogous to \eqref{abstract-system}, the enhanced system has the abstract 
structure
\begin{subequations}\label{abstract-system+}
	\begin{align}\label{abstract-system+1}
	\partial_{\DT{\bm\chi}}{\mathscr T}
	%(\varrho)
	\DDT{\bm\chi}+\partial_{\bm\chi}\calE(\bm\chi,\mono,\upphi)=\calF(t),
	\\\label{abstract-system+2}
	\partial_{\DT\mono}{\mathscr R}^*(\bm\chi,\mono)\DT\mono+\partial_{\mono}\calE(\bm\chi,\mono,\upphi)=0,
	%A_{\bm\chi(t),\mono}\alpha\mu_\flat(t),
	\\\label{abstract-system+3}
	\partial_{\upphi}\calE(\bm\chi,\mono,\upphi)=0,
	\end{align}\end{subequations}
where the potential of dissipative forces expressed in terms of the rate 
$\DT\mono$, i.e.\
\begin{align}\label{dissip-pot}
{\mathscr R}(\bm\chi,\mono,\DT\mono)=
\int_\varOmega\frac12|\bm M(\nablait\bm\chi,\mono)^{1/2}
\nablait\Delta_{\bm M(\nablait\bm\chi,\mono),\alpha}^{-1}\DT\mono|^2
%+\frac12\tau_{_{\rm R}}\DT\mono^2
\,\d x+\int_{\partial\varOmega}\frac\alpha2(\Delta_{\bm M(\nablait\bm\chi,\mono),\alpha}^{-1}\DT\mono)^2\,\d S\,,
\end{align} 
is quadratic in term of the 
rate $\DT\mono$ with $\Delta_{\bm M,\alpha}^{-1}:\xi\mapsto\mono$ denoting the 
linear operator which assigns $\xi$ the (weak) solution $\mono$ to the 
boundary-value problem $\xi=-{\rm div}(\bm M\nablait\mono)$
with the boundary condition $(\bm M\nablait\mono)\cdot n+\alpha\mono=0$. 
In fact, ${\mathscr R}^*(\bm\chi,\cdot)$ is the convex conjugate functional
to the ${\mathscr R}(\bm\chi,\cdot)$.
% and $A_{\bm\chi,\mono}$ in 
%\eq{abstract-system+2} is a linear operator,\COMMENT{DETAILS} cf.\ also 
A general $\mu_\flat$ would give rise to a non-zero 
right-hand side in \eq{abstract-system+2}, cf.\ also \cite{Roub17ECTD}.

%Note also that the kinetic energy ${\mathscr T}={\mathscr T}(\varrho,\DT{\bm\chi})$ is now to be considered explicitly 
%dependent on $\varrho$ for the case that $\varrho$ depends on 
%$\mono$ like e.g.\ in Remark~\ref{rem-Biot}. 
The energy balance
on a time interval $[0,t]$ can be revealed by testing the particular equations 
\eqref{abstract-system+} respectively by $\DT{\bm\chi}$, $\DT\mono$, and 
$\upphi$:
% with a by-part integration of the kinetic-energy term:
\begin{align}\nonumber
&\!\!\!\!\ddd{{\mathscr T}(
	%\varrho(t),
	\DT{\bm\chi}(t))
	+\calE(\bm\chi(t),\mono(t),\upphi(t))_{_{_{_{_{_{_{}}}}}}}}
{kinetic and stored energy}{at time $t$}\!\!\!\!
+\!\!\!\!\!\ddd{2\int_0^t\!{\mathscr R}(\bm\chi,\mu)\,\d t}{energy dissipated on}{the time interval $[0,t]$}
\\&=\!\!
\ddd{{\mathscr T}(\varrho(0),\DT{\bm\chi}(0))+\calE(\bm\chi(0),\mono(0),\upphi(0))}{kinetic and stored energy}{at the initial time $0$}\!\!\!
%+\frac12\int_0^t\big[\partial_{\varrho\DT{\bm\chi}}^2{\mathscr T}(\varrho)\big]
%(\DT\varrho,\DT{\bm\chi}){\cdot}\DT{\bm\chi}
+\!\!\!\!\!\ddd{\int_0^t\Big(\langle\calF,(\DT{\bm\chi},\DT\upphi)\rangle+\!\int_{\varGamma}\mu_{\rm ext}\mu\,\d S\Big)\,\d t}{work done by external}
{mechano-chemical loading}\!\!\!
\label{energy-balance}
\\[-2.5em]\nonumber
\end{align}
with $\mu=\partial_\mono\calE(\bm\chi,\mono,\upphi)$,
cf.\ \eqref{abstract-system} where ${\mathscr R}=0$.
%where the last term disappears when $\varrho$ does not vary in time, as 
%it is in the conservative case \eqref{abstract-system} where also 
%${\mathscr R}=0$.

A more specific example of free energy is the celebrated Biot model 
\cite{Biot41GTTS} formulated (analogously as at small strains) here at large 
strains, cf.\ also e.g.\ \cite{DuSoFi10TSMF}, resulting into a potential %\COMMENTGT{REFERENCE???}
\begin{align}\nonumber\\[-2.5em]\label{Biot-model}
\varphi
%_{_{\rm BIOT}}
(\bm F,\mono)=\varphi_{_{\rm S}}(\bm F)+
\frac12M_{_{\rm B}}\big(\mono-\mono_\text{\sc e}\beta(1{-}\det\bm F)\big)^2+\begin{cases}
\color{black}\kappa\color{black}
\mono(\ln(\mono/\mono_\text{\sc e})-1)&\text{ for }\ \mono>0,
\\\qquad0&\text{ for }\ \mono=0,
\\\qquad+\infty&\text{ for }\ \mono<0,\end{cases}
\end{align}
where $\mono$ is a mass concentration of a diffusant and $\mono_\text{\sc e}$
is an equilibrium concentration, 
$M_{_{\rm B}}>0$ the so-called Biot modulus,
%and
$\beta\ge0$ the Biot coefficient,
\color{black}and $\kappa>0$ a coefficient\color{black}.
Let us notice the singularity of $\partial_\mono\varphi
%_{_{\rm BIOT}}
(\bm F,\mono)$ at $\mono=0$, which ensures non-negativity of the diffusant
concentration $\mono$. Here $\varphi_{_{\rm S}}$ plays the role of $\varphi$ 
in Sections~\ref{sec-mono} and \ref{sec-mono-anal}.
%\COMMENT{I AM NOT SURE HOW THE SINGULARITY FOR $\mono=0$ CAN BE HANDLED}

To facilitate the analysis, we neglect the mass density of the diffusant, 
cf.\ Remark~\ref{rem-inertia} below.

Let us now briefly present the analysis if the diffusion \eq{diffusion}
is involved. The definition of the weak solution must be formulated 
carefully if $\varrho$ has a singularity as in \eq{Biot-model},
combining the concept of the variational solution for \eq{chem-pot} 
and the conventional weak solution for (\ref{diffusion}b,c)
with the initial condition for $\mono$:

\begin{definition}[Weak solution to the problem \eq{system} with diffusion
	\eq{diffusion}]\label{def2}
	%\COMMENT{TO BE STILL CHECKED}
	A quad\-ruple $(\bm\chi,\upphi,\mono,\mu)\in 
	C_{\rm w}(I;H^{2+\gamma}(\varOmega;\R^d))
	\times L^\infty(I;W^{1,p}(\mathbb R^d))\times C_{\rm w}(I;L^2(\Omega))\times 
	L^2(I;H^1(\Omega))$ is a weak solution to \eq{system} and \eq{diffusion}
	with\eq{BC}--\eq{IC} and the initial condition $\mono|_{t=0}=\mono_0$
	if $\mathscr{H}(\nablait\bm\chi)\in L^\infty(I)$, 
	$\DT{\bm\chi}\in C_{\rm w}(I;L^2(\varOmega;\R^d))$,
	and $\DT\mono\in L^2(I;H^1(\Omega)^*)$, and if 
	%the following conditions hold:\medskip
	\eq{w-sln1} with $\partial_{\bm F}\varphi(\nablait\bm \chi,\mono)$
	instead of $\partial\varphi(\nablait\bm \chi)$ holds together
	with the initial condition $\DT{\bm\zeta}(0)=\bm v_0$,
	%\noindent 1) for every $\bm\zeta\in C^\infty(\overline{Q};\mathbb R^d)$ 
	%satisfying $\bm\zeta(T)=0$,
	%\begin{subequations}\label{w-sln+}\begin{align}\nonumber
	%&\int_Q\partial_{\bm F}\varphi(\nablait\bm \chi,\mono){:}\nablait\bm\zeta
	%%-\varrho\DT{\bm\chi}{\cdot}\DT{\bm\zeta}
	%-\DT\varrho\DT{\bm\chi}{\cdot}\bm\zeta
	%+\bigg(\int_\varOmega(\nablait^2\bm\chi(x')-\nablait^2\bm\chi(x))\vdots\mathscr K(x-x')\,\d x'\bigg)\vdots\nablait^2\bm\zeta(x)\,\d x\d t
	%%+\int_Q(\nablait^2\bm\chi(x')-\nablait^2\bm\chi(x))\vdots\mathscr K(x-x')\vdots\nablait^2\bm\zeta(x)\,\d x\d x
	%\\[-.4em]&\qquad\qquad\qquad\qquad\qquad
	%+\int_Qsm(\nabla\upphi\circ\bm\chi)\cdot\bm\zeta\,\d x\d t
	%=\int_{\varSigma}\bm g{\cdot}\bm\zeta\,\d S\d t
	%-\int_{\varOmega}\varrho\bm\chi_0{\cdot}\DT{\bm\zeta}(0)\,\d x;
	%\label{w-sln1+}\end{align}
	%and also the initial condition $\DT{\bm\zeta}(0)=\bm v_0$ holds.
	and if,
	%\medskip\noindent 2) 
	for every $w\in H^1(Q;\mathbb R^d)$ satisfying $w(T)=0$, 
	it holds
	\begin{align}
	&\int_Q\bm M(\nablait\bm\chi)\nablait\mu{\cdot}\nablait w
	-\mono\DT w\,\d x\d t=\int_\varOmega\mono_0 w(0)\,\d x
	\end{align}
	%\begin{align}\int_Q
	%\big(\partial_{\mono}\varphi(\nablait\bm\chi,\mono)-s\upphi(\bm\chi)\big)w
	%+\tau_{_{\rm R}}\DT\mono w
	%+\tau_{_{\rm R}1}|\nablait\DT\mono|^{p-2}\nablait\DT\mono{\cdot}\nablait w
	%\,\d x\d t=0
	%%\int_{\varOmega} \tau_{_{\rm R}}\mono_0\bm w(0)+\tau_{_{\rm R}1}....\,\d x;
	%\end{align}
	%\end{subequations}
	with $\mu$ satisfying the variational inequality 
	\begin{align}\label{var-ineq}
	\int_\varOmega\varphi(\nablait\bm\chi(t),\mono(t))\,\d x
	\le\int_\varOmega\varphi(\nablait\bm\chi(t),\widetilde\mono)
	+(\mu(t)-\upphi(t,\bm\chi(t)))(\widetilde\mono-\mono(t))\,\d x
	\end{align}
	for a.a.\ $t\in I$ and for all $\widetilde\mono\in L^2(\varOmega)$,
	%from \eq{chem-pot} which is to be satisfied a.e.\ on $Q$, 
	and if also \eq{w-sln3} holds.
\end{definition}

Beside \eqref{nested-X}, 
we now use the Galerkin approximation also for \eqref{diffusion} with 
finite-dimensional subspaces of $H^1(\varOmega)$. It is important (and
we can assume it without loss of generality) that 
both \eqref{chem-pot} and \eqref{diffusion+} use the same finite-dimensional
subspaces of $H^1(\varOmega)$, which facilitates the cross-test 
of  \eqref{chem-pot} by $\DT\mono$ and of \eqref{diffusion+} by $\mu$.

Again, we may assume that the initial
conditions $\bm\chi_0$, $\bm v_0$, and $\mono_0$ 
belong to all these subspaces without loss of generality. 
Let us denote the approximate 
solution thus created by $(\bm\chi_k,\mono_k,\mu_k,\upphi_k)$
It is important to use the same subspace for discretisation of both equations 
in \eqref{diffusion} to allow the cross-test of \eqref{chem-pot} by 
$\DT\mono_k$ and of \eqref{diffusion+} by $\mu_k$. 

\begin{proposition}[Weak solution to \eqref{system} with \eqref{diffusion}]
	Let %$\tau_{_{\rm R}}\ge0$
	$\varphi:{\rm SL}^+(d)\times\R\to\R$ be smooth, 
	$\varphi(\cdot,\mono):{\rm SL}^+(d)\to\R$ satisfy \eq{ass-1}
	uniformly for $\mono\in\R$, $\varphi(\bm F,\cdot)$ be uniformly convex 
	(uniformly also with respect to
	$\bm F$) with
	$\partial_{\bm F\mono}^2\varphi:{\rm GL}^+(d)\times\R\to\R^{d\times d}\times\R$ 
	and $1/\partial_{\mono\mono}^2\varphi:{\rm GL}^+(d)\times\R\to\R^{d\times d}\times\R^+$
	be continuous when extended by zero for $\mono<0$, and
	$|\partial_{\bm F\mono}^2\varphi(\bm F,\mono)|\le C(\bm F)(1+|\mono|)$ with some 
	$C:\R^{d\times d}\to\R^+$ continuous. Let also \eq{ass-kernel-K+}
	and \eq{eq:56-f-g} holds.
	Moreover, let $\bm\chi_0\in H^{2+\gamma}(\varOmega;\R^d)$ and
	$\bm v_0\in L^2(\varOmega;\R^d)$ and $\mono_0\in L^2(\varOmega)$
	with $\varphi(\nablait\bm\chi_0,\mono_0)\in L^1(\varOmega)$ 
	Then, together with the a-priori estimates \eq{est-1}, also 
	%options: 1) $\varrho$ time independent and  $\tau_{_{\rm R}1}\ge0$, or 
	%2) $\varrho$ time dependent, $\tau_{_{\rm R}1}>0$, $p>3$, and $|\partial_\mono\%varphi(\bm F,\mono)|\le C(1+|\mono|)$. Then ....
	\begin{subequations}\label{est-1+}
		\begin{align}%\label{est-1+}
		%&\|\bm\chi_k\|_{W^{1,\infty}(I;L^2(\varOmega;\R^d))\,\cap\,L^\infty(I;H^{2+\gamma}(\varOmega;\R^d))}^{}\le C
		%\ \ \text{ with }\ \ \Big\|\frac1{\det(\nablait\bm\chi)}\Big\|_{L^\infty(Q)}^{}\le C,\\
		&\|\mono_k\|_{L^2(I;W^{1,1}(\varOmega))\,\cap\,L^\infty(I;L^2(\varOmega))\,\cap\,H^1(I;H^1(\varOmega)^*)}^{}\le C\ \ \text{ and }\ \
		\\& 
		\|\mu_k\|_{L^2(I;H^1(\varOmega))}^{}\le C.
		%\\&\|\upphi_k\|_{L^\infty(I;H^1(\R^d))}^{}\le C.
		\end{align}
	\end{subequations}
	There is a subsequence of $\{(\bm\chi_k,\mono_k,\mu_k,\upphi_k)\}_{k\in\N}$ 
	converging weakly* in the topologies indicated in \eqref{est-1} and 
	the limit of any such subsequence is the weak solution to 
	\eqref{system} with \eqref{diffusion} with the initial conditions 
	$\bm\chi(0)=\bm\chi_0$, $\DT{\bm\chi}=\bm v_0$, and $\mono(0)=\mono_0$.
\end{proposition}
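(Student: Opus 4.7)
The plan is to mirror the strategy of Proposition 3.1 but cross-couple its arguments with the diffusion block, following the abstract structure \eqref{abstract-system+}. I would set up nested finite-dimensional subspaces $X_k\subset H^{2+\gamma}(\varOmega;\R^d)$ for $\bm\chi$ as in \eqref{nested-X}, together with nested subspaces $Y_k\subset H^1(\varOmega)$ used simultaneously for both $\mono$ and $\mu$, and arrange that $\bm\chi_0,\bm v_0\in X_0$ and $\mono_0\in Y_0$. Local existence of the Galerkin solution $(\bm\chi_k,\mono_k,\mu_k,\upphi_k)$ would proceed as in the proof of Proposition~3.1: first eliminate $\upphi_k$ via the Legendre transform (giving $\upphi_k$ as a function of $(\bm\chi_k,\mono_k)$), and then eliminate $\mu_k$ by observing that at the Galerkin level \eqref{chem-pot} determines $\mu_k\in Y_k$ from $(\bm\chi_k,\mono_k)$ through a strictly monotone relation (thanks to the uniform convexity of $\varphi(\bm F,\cdot)$). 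This reduces the system to an ODE for the coefficients of $\bm\chi_k$ and $\mono_k$; the Healey--Kr\"omer estimate \eqref{det-positive-on-lev} keeps $\nablait\bm\chi_k$ in ${\rm SL}^+(d)$, while the singularity of $\partial_\mono\varphi$ at $\mono=0$ keeps $\mono_k\ge 0$ along the flow.

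For the a priori estimates I would perform the cross-test: \eqref{abstract-system+1} by $\DT{\bm\chi}_k$, \eqref{chem-pot} by $\DT\mono_k$, \eqref{diffusion+} by $\mu_k$, and \eqref{abstract-system+3} by $\DT\upphi_k$. The terms $\int\mu_k\DT\mono_k\,\d x$ cancel between the second and third tests, producing the standard Biot energy identity \eqref{energy-balance}: kinetic plus mechanical plus electrostatic plus chemical energy on the left, dissipation $\int(\nablait\mu_k)^\top\bm M\nablait\mu_k+\int_\Gamma\alpha\mu_k^2$, and the work of $\bm f,\bm g,\mathsf q_{\rm ext},\mu_\flat$ on the right. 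Absorbing the boundary and forcing terms as in \eqref{eq:69}--\eqref{eq:71} and applying Gronwall's lemma yields \eqref{est-1} and the $L^\infty(I;L^2)$ bound on $\mono_k$ plus the $L^2(Q)$ bound on $\bm M^{1/2}\nablait\mu_k$; combined with the $L^\infty$ bound on $(\operatorname{Cof}\nablait\bm\chi_k)$ and the lower bound on $\det\nablait\bm\chi_k$ this gives $\mu_k\in L^2(I;H^1(\varOmega))$. The $W^{1,1}$-bound on $\mono_k$ follows by differentiating \eqref{chem-pot}:
\begin{equation*}
\nablait\mono_k=\frac{1}{\partial^2_{\mono\mono}\varphi}\Bigl(\nablait\mu_k-\partial^2_{\bm F\mono}\varphi{:}\nablait^2\bm\chi_k-\nablait(\upphi_k\circ\bm\chi_k)\Bigr),
\end{equation*}
and using $1/\partial^2_{\mono\mono}\varphi$ bounded, $|\partial^2_{\bm F\mono}\varphi|\le C(1+|\mono_k|)$ with $\mono_k\in L^\infty(I;L^2)$, $\nablait^2\bm\chi_k\in L^\infty(I;L^2)$ and $\nablait(\upphi_k\circ\bm\chi_k)\in L^\infty(I;L^p)$. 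The bound on $\DT\mono_k$ in $L^2(I;H^1(\varOmega)^*)$ is read off directly from \eqref{diffusion+}.

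For the limit passage, I would extract subsequences as in Proposition~3.1, obtaining the convergences of $\bm\chi_k$ (strong in $L^r(I;C^1(\bar\varOmega))$ with $\nablait^2\bm\chi_k$ strong in $L^r(I;L^2)$), of $\upphi_k$ (strong in $L^r(I;C(B))$ on balls), and Healey--Kr\"omer still yielding the uniform lower bound on $\det\nablait\bm\chi_k$. For $\mono_k$, the bounds $L^\infty(I;L^2)\cap L^2(I;W^{1,1})$ together with $\DT\mono_k\in L^2(I;H^1(\varOmega)^*)$ give, by Aubin--Lions (using the compact embedding $W^{1,1}(\varOmega)\subset\subset L^1(\varOmega)$) and interpolation with the $L^\infty(I;L^2)$ bound, strong convergence of $\mono_k$ in $L^r(I;L^s(\varOmega))$ for some $r,s>1$. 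For $\mu_k$ only weak convergence in $L^2(I;H^1)$ is available. The mechanical equation is then handled exactly as in Proposition~3.1 with the extra variable $\mono_k$ fed into $\partial_{\bm F}\varphi$ through dominated convergence (continuity of $\partial_{\bm F}\varphi$ away from $\det\bm F=0$). The diffusion equation passes to the limit in the weak form because $\bm M(\nablait\bm\chi_k)$ converges uniformly and $\nablait\mu_k$ weakly.

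The main obstacle is the limit passage in the inclusion \eqref{chem-pot} given the singular, nonsmooth dependence of $\varphi$ on $\mono$ at $\mono=0$. To circumvent differentiating $\varphi$, I would rewrite the Galerkin version of \eqref{chem-pot} in the equivalent variational form
\begin{equation*}
\int_\varOmega\varphi(\nablait\bm\chi_k(t),\mono_k(t))\,\d x\le\int_\varOmega\varphi(\nablait\bm\chi_k(t),\widetilde\mono)+(\mu_k(t)-\upphi_k(t,\bm\chi_k(t)))(\widetilde\mono-\mono_k(t))\,\d x
\end{equation*}
for $\widetilde\mono$ in $Y_k$, and then pass to the limit using lower semicontinuity of $\int\varphi(\nablait\bm\chi,\cdot)\,\d x$ on the left (Fatou, by convexity in $\mono$ and strong convergence of $\nablait\bm\chi_k$), strong convergence of $\mono_k,\bm\chi_k,\upphi_k\circ\bm\chi_k$ on the right, and density of $\bigcup_k Y_k$ in $L^2(\varOmega)$ to obtain \eqref{var-ineq} for all $\widetilde\mono\in L^2(\varOmega)$. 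This bypasses the singularity while delivering exactly the notion of weak solution in Definition~\ref{def2}.
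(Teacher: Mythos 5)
Your a\,priori estimates and your derivation of the $W^{1,1}$-bound on $\mono_k$ follow the paper's route (energetic cross-test, then differentiation of \eqref{chem-pot}, where your inclusion of the term $\nablait(\upphi_k\circ\bm\chi_k)$ is in fact more careful than the paper's formula \eqref{strong-conv-mono}), but the limit passage in the mechanical equation is where your argument has a genuine gap. In the weak form \eqref{w-sln1} adapted to the diffusion case the fixed charge $q$ is replaced by the unknown concentration $\mono$, so the integrated-by-parts electrostatic-force terms contain $\nablait(\mono_k\bm\zeta)$, i.e.\ $\nablait\mono_k$, multiplied by $(\upphi_k\circ\bm\chi_k)\,\nablait\bm\chi_k^{-\top}$, which converges only in $C(\overline\varOmega)$ for a.a.\ $t$. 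You propose to handle the mechanical equation ``exactly as in Proposition~3.1'' (where $q$ was a datum) and you explicitly settle for mere weak convergence of $\mu_k$ in $L^2(I;H^1(\varOmega))$; but then you control $\nablait\mono_k$ only through an $L^2(I;L^1(\varOmega))$ bound, which does not even give weak $L^1$-compactness without equi-integrability, and the product above cannot be passed to the limit. The paper's proof hinges precisely on this point: it first proves \emph{strong} convergence of $\nablait\mu_k$ in $L^2(Q;\R^d)$ (from the diffusion boundary-value problem, using the uniform positive definiteness of the pulled-back mobility \eqref{M-pullback}), combines it with the strong convergence of $\nablait^2\bm\chi_k$ and of $\mono_k$ in $L^2$, and then reads off from the differentiated relation \eqref{strong-conv-mono} the strong convergence of $\nablait\mono_k$ in $L^1(Q;\R^d)$, hence (along a further subsequence) convergence of $\mono_k(t)$ in $W^{1,1}(\varOmega)$ for a.a.\ $t$, which is exactly what the analogue of \eqref{weak-conv} needs. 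You derive the same differentiated identity for the estimate but never exploit it in the limit passage, and by renouncing strong convergence of $\nablait\mu_k$ you discard the one ingredient that makes this step work.

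Two smaller points. The claim that the singularity of $\partial_\mono\varphi$ at $\mono=0$ keeps $\mono_k\ge0$ ``along the flow'' is not valid at the Galerkin level (pointwise sign constraints are not preserved under projection onto $Y_k$ and no truncation tests are available); under the proposition's hypotheses it is also unnecessary, since $\varphi$ is assumed smooth on ${\rm SL}^+(d)\times\R$ with $\partial^2_{\bm F\mono}\varphi$ and $1/\partial^2_{\mono\mono}\varphi$ continuously extended for $\mono<0$. Likewise, the bound $\nablait(\upphi_k\circ\bm\chi_k)\in L^\infty(I;L^p)$ involves composing $\nabla\upphi_k$ with a possibly non-injective $\bm\chi_k$ and needs a justification via the area formula \eqref{eq:54} together with a uniform bound on the multiplicity, and the $L^2(I;H^1(\varOmega)^*)$ bound on $\DT\mono_k$ ``read off'' from \eqref{diffusion+} requires the Hahn--Banach extension remark of the paper, since the Galerkin equation defines $\DT\mono_k$ only against test functions in $Y_k$. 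These are repairable; the missing strong convergence of $\nablait\mono_k$ is the substantive gap.
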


\begin{proof}
	%\COMMENT{ careful for the ``dual'' estimate of $\DDT{\bm\chi}_k$ if $\varrho_k$ varies in time -- THIS MIGHT BE A PROBLEM -- SO MAYBE $\varrho$ FIXED??}
	%
	We perform the energetic test of the system \eq{system} with 
	\eq{abstract-system+} by successively $\DT{\bm\chi}_k$, $\DT\phi_k$, 
	$\DT\mono_k$, and $\mu_k$.
	
	By a manipulation as in \eq{eq:61}--\eq{end-of-est} extended now by the 
	dissipative diffusion, we obtain 
	\begin{align}\nonumber
	&
	%\frac{\d}{\d t}\bigg(\int_\Omega\frac\varrho2|\DT{\bm\chi}|^2+\varphi(\nabla \bm\chi)
	%%+\mono(\phi\circ \bm\chi)
	%\,\d x+\int_{\R^d}\frac{\eps_0}2\big|\nabla\phi(z)\big|^2
	%+\frac{\eps_1}{p'}|\nabla\phi(z)\big|^p\,\d z+\mathscr{H}(\nabla^2\bm\chi)\bigg)
	%\\&\nonumber=
	\frac{\d}{\d t}\bigg(
	\int_\varOmega\frac\varrho2|\DT{\bm\chi}_k|^2+\varphi(\nablait\bm\chi_k,\mono_k)\,\d x
	+\int_{\R^d}\frac{\eps_0}2\big|\nabla\upphi_k\big|^2
	+\frac{\eps_1}{p'}|\nabla\upphi_k\big|^p\,\d \mathsf x+\mathscr{H}(\nabla^2\bm\chi_k)\bigg)
	\\[-.2em]&\nonumber\qquad\qquad
	+\int_\varOmega\bm M(\nablait\bm\chi_k)\nablait\mu_k{\cdot}\nablait\mu_k\,\d x
	+\int_\varGamma\alpha\mu_k^2\,\d S
	%\UUU{=}{\eq{system} tested}{by $\DT y_k$ and $\DT\phi_k$}
	\\[-.2em]&\qquad\qquad\qquad\qquad=
	\int_\varOmega\bm f\cdot\DT{\bm\chi}_k\,\d x+\int_\Gamma\bm g\cdot\DT{\bm\chi}_k
	+\alpha\mu_\flat\mu\,\d S
	+\int_{\R^d}\DT{\mathsf{\mono}}_{\rm ext}\upphi_k\,\d \mathsf x\,.
	\label{system-est+}
	\end{align}
	From this \COMMENT{while using the assumption that $\DT{\mathsf{\mono}}_{\rm ext}=0$}
	and also by comparison of $\DT\mono_k$, we obtain the a-priori 
	estimates \eq{est-1} and,
	except the $L^2(H^1)$-estimate of $\mono_k$, also 
	\eq{est-1+}. More in detail, the $L^2(I;H^1(\varOmega)^*)$-estimate 
	of $\DT\mono_k$ in \eq{est-1+} is meant for a suitable Hahn-Banach extension
	of the functional $\DT\mono_k$ defined originally only on the 
	functions valued on the finite-dimensional subspace used for the $k$-th
	Galerkin approximation.
	
	As $\mono$ occurs nonlinearly in the energy $\varphi$, we need to prove
	its strong convergence. 
	Even more, the limit passage in \eq{weak-conv} modified for 
	$\mono_k\to\mono$ needs the strong convergence of 
	%$\nablait\mono_k(t)$ in $L^2(I;H^1(\varOmega;\R^d))$ 
	$\mono_k(t)$ in $W^{1,1}(\varOmega)$ because, from a mere weak convergence
	of $\nablait\mono_k$, we could not inherit the convergence for a.a.\ $t$
	needed in \eq{weak-conv}. As we do not use the Cahn-Hilliard model,
	we do not have $\nablait\mono$ directly estimated,
	% in contrast to Sect.\,\ref{sec-dip}, 
	but we can rely on the uniform convexity of $\varphi(\bm F,\cdot)$, as 
	indeed e.g.\ in the Biot model \eq{Biot-model}.
	Then the estimate of $\nablait\mono$ can be obtained 
	%even for $\kappa=0$ 
	by applying $\nabla$-operator to \eq{chem-pot} to see 
	$\nablait\mu=\partial_{\bm F\mono}^2\varphi(\bm F,\mono)\nablait\bm F+
	\partial_{\mono\mono}^2\varphi(\bm F,\mono)\nablait\mono$, so that we can estimate
	\begin{align}
	\nablait\mono_k=
	%[\partial_{\mono\mono}^2\varphi(\nablait\bm\chi_k,\mono_k)]^{-1}(
	\frac{\nablait\mu_k
		-\partial_{\bm F\mono}^2\varphi(\nablait\bm\chi_k,\mono_k)\nablait^2\bm\chi_k}{\partial_{\mono\mono}^2\varphi(\nablait\bm\chi_k,\mono_k)}.
	%\epsilon\|\mono_k{-}\mono\|_{L^2(Q)}^2&\le
	%\int_Q\big(\partial_{\mono}\varphi(\nablait\bm\chi_k,\mono_k)-\partial_{\mono}\varphi(\nablait\bm\chi_k,\mono)\big)(\mono_k{-}\mono)\,\d x\d t
	%\\&\nonumber
	%\le\int_\varOmega\tau_{_{\rm R}}|\mono_k(T){-}\mono(T)|^2\,\d x
	%+\int_Q\big(\partial_{\mono}\varphi(\nablait\bm\chi_k,\mono_k)-\partial_{\mono}%\varphi(\nablait\bm\chi_k,\mono)\big)(\mono_k{-}\mono)\,\d x\d t
	%\\&\nonumber=\int_Q(\mu_k-s\upphi_k(\bm\chi_k))(\mono_k{-}\mono)\,\d x\d t
	%\\[-.3em]&\qquad
	%-\int_\varOmega\mono(T)(\mono_k(T){-}\mono)(T)\,\d x
	%-\int_Q\partial_{\mono}\varphi(\nablait\bm\chi_k,\mono)(\mono_k{-}\mono)\,\d x\d t\to0,
	\label{strong-conv-mono}
	\end{align}
	provided we assume still the uniform convexity of $\varphi(\bm F,\cdot)$, as 
	indeed e.g.\ in the Biot model \eq{Biot-model}. This gives
	the $L^2(H^1)$-estimate of $\mono_k$ in \eq{est-1+}.
	
	Now we select the weakly* converging subsequence in the topologies 
	indicated in \eq{est-1} and \eq{est-1+}. Furthermore, we 
	prove $\nablait\mu_k\to\nablait\mu$ strongly in $L^2(Q;\R^d)$. 
	This can be seen from the initial-boundary-value problem 
	(\ref{diffusion}b,c) when taking into account the uniform positive-definiteness
	of the pulled-back mobility matrix \eq{M-pullback}. We have also 
	$\nablait^2\bm\chi_k\to\nablait^2\bm\chi$ strongly in $L^2(Q;\R^{d\times d\times d})$ 
	and $\mono_k\to\mono$ strongly in $L^2(Q)$. 
	%by the Aubin-Lions theorem, 
	%from \eq{strong-conv-mono} we can see that 
	%$\nablait\mono_k\to\nablait\mono$ in $L^1(Q;\R^d)$. 
	In particular, selecting possibly another subsequence, 
	we have $\nablait\mu_k\to\nablait\mu$ strongly in $L^2(\varOmega;\R^d)$ and 
	$\nablait^2\bm\chi_k(t)\to\nablait^2\bm\chi(t)$ strongly in 
	$L^2(\varOmega;\R^{d\times d\times d})$ and also $\mono_k(t)\to\mono(t)$ strongly in 
	$L^2(\varOmega)$, from \eq{strong-conv-mono} we can see that 
	$\nablait\mono_k(t)\to\nablait\mono(t)$ strongly in $L^1(Q;\R^d)$. Here the 
	continuity of $\partial_{\bm F\mono}^2\varphi$ and of 
	$1/\partial_{\mono\mono}^2\varphi$
	has been used.
	
	%where the equality is due to \eqref{chem-pot} in its Galerkin approximation 
	%tested by $\mono_k{-}\mono$.\COMMENT{STILL: here $\mono$ to be approximated}
	%For the convergence to 0 we used that 
	%$\upphi_k(\bm\chi_k)\to\upphi(\bm\chi)$ weakly in $L^2(I;H^1(\varOmega))$ 
	%\COMMENT{HERE WE NEED $\nabla\upphi_k$ ESTIMATED!! OR Cahn-Hilliard model !?}
	%while 
	%............ $\mono_k\to\mono$ 
	%%weakly in $H^1(I;L^2(\varOmega))$ due to $\tau_{_{\rm R}}>0$,
	%%and thus also 
	%strongly in $L^2(I;H^1(\varOmega)^*)$ by the Aubin-Lions theorem,
	%so that $\upphi_k(\bm\chi_k)\mono_k\to\upphi(\bm\chi)\mono$
	%weakly in $L^1(Q)$.
	
	%\hrule
	
	Note also that the arguments \eq{eq:67}
	%now
	modifies because
	the last integral in \eq{eq:67} is now 
	$\int_{\varOmega}\mono_k
	(\upphi_k{\circ}\bm\chi_k)-\mono_j(\upphi_j{\circ}\bm\chi_j))\,\d \mathsf x$
	and it again converges to 0 when $k,j\to\infty$.
	
	The limit passage in 
	\begin{align*}%\label{var-ineq}
	\int_\varOmega\varphi(\nablait\bm\chi_k(t),\mono_k(t))\,\d x
	\le\int_\varOmega\varphi(\nablait\bm\chi_k(t),\widetilde\mono)
	+(\mu_k(t)-\upphi_k(t,\bm\chi_k(t)))(\widetilde\mono-\mono_k(t))\,\d x
	\end{align*}
	towards \eq{var-ineq} is then easy by lower semicontinuity or continuity.
	%
	%\begin{align}\label{eq:67+}
	%&\int_{{\mathbb R^d}}\left((\varepsilon_0+\varepsilon_1|\nabla\upphi_k|^{p-2})\nabla\upphi_k-(\varepsilon_0+\varepsilon_1|\nabla\upphi_j|^{p-2})\nabla\upphi_j\right)(\nabla\upphi_k-\nabla\upphi_j)\,\d \mathsf x
	%\nonumber\\&\qquad=
	%\int_{\R^d}\mathsf{\mono}_{\rm ext}(\upphi_k-\upphi_j)\,\d \mathsf x+
	%\int_{\varOmega}\!\!\!\mono_k
	%(\upphi_k{\circ}\bm\chi_k)-\mono_j(\upphi_j{\circ}\bm\chi_j))\,\d \mathsf x.
	%\end{align}
	%
\end{proof}

%Let us note that 

% This would .... but the limit passage in \eq{weak-conv} modified for 
%$\mono_k\to\mono$ needed the strong convergence of $\nablait\mono_k$ 
%in $L^2(I;H^1(\varOmega;\R^d))$ because, from a mere weak convergence
%of $\nablait\mono_k$ we could not inherit the convergence for a.a.\ $t$
%needed in \eq{weak-conv}. 
%\COMMENT{MAYBE strong convergence of $\nablait\mu$ !!!}

\begin{remark}[{\it Maxwell stress}]
	{\rm
		The term $q\nabla\upphi$ on the left-hand side of the pointwise force balance \eqref{system1} is the opposite of the electrostatic body-force referential (Lagrangean) density. Its spatial (Eulerian) density can be written as the divergence of a tensor field, which is then recognized as the Maxwell stress. We illustrate this fact by a formal calculation, performed under the assumption that $\bm\chi$ is invertible and that the external charge density $\mathsf q_{\rm ext}$ vanishes.
		As a start, we test \eqref{system1} by a virtual velocity $\bm\zeta$ which is smooth and vanishes in a neighborhood of $\partial\varOmega$, and then we perform an integration over $\varOmega$. In the resulting equation we focus our attention on the term
		$$
		W(\bm\zeta):=\int_\varOmega q(\nabla\upphi{\circ}\bm\chi)\cdot\bm\zeta\,\dx=\int_\varOmega q(\nabla\upphi\cdot(\bm\zeta{\circ}\bm\chi^{-1})){\circ}\bm\chi \dx,
		$$
		which we interpret as the virtual work performed by the referential electrostatic body force.
		%
		%Then,
		Further, we use the weak form \eqref{w-sln3} of the nonlinear Poisson equation \eqref{system2}, substituting $\nabla\upphi\cdot(\bm\zeta{\circ}\bm\chi^{-1})$ in place of the test field $\zeta$. By doing so we obtain
		\begin{equation*}
		\begin{aligned}
		W(\bm\zeta)&=\int_{\bm\chi(\varOmega)}(\varepsilon_0+\varepsilon_1|\nabla\upphi|^{p-2})\nabla\upphi\cdot\nabla(\nabla\upphi\cdot(\bm\zeta{\circ}\bm\chi^{-1}))\\[-.6em]
		&\qquad\ =\int_{\bm\chi(\varOmega)}(\varepsilon_0+\varepsilon_1|\nabla\upphi|^{p-2})\nabla\upphi\cdot(\nabla\nabla\upphi\,(\bm\zeta{\circ}\bm\chi^{-1})+\nabla(\bm\zeta{\circ}\bm\chi^{-1})^\top\nabla\upphi)
		\end{aligned}
		\end{equation*}
		Then, we recall that in our regularized model the spatial electrostatic energy density is $e(\nabla\upphi)$, where $e(\bm v)=\frac {\varepsilon_0}2|\bm v|^2+\frac {\varepsilon_1}{p}|\bm v|^p$. Thus, on introducing the electric field $\bm{\mathsf e}=-\nabla\upphi$, the electric displacement $\bm{\mathsf d}=e'(\bm{\mathsf e})=(\varepsilon_0+\varepsilon_1|\nabla\upphi|^{p-2})\bm{\mathsf e}$, and the \emph{spatial test velocity} $\bm\upzeta=\bm\zeta\circ\bm\chi^{-1}$, we can write
		\begin{equation*}
		\begin{aligned}
		W(\bm\zeta)
		&=\int_{\bm\chi(\varOmega)}e'(\nabla\bm{\mathsf e})\cdot(\nabla\bm{\mathsf e}\,\bm\upzeta)+\bm{\mathsf d}\cdot\nabla\bm\upzeta^\top\bm{\mathsf e}
		\\[-.4em]
		&\qquad\ =\int_{\bm\chi(\varOmega)}\nabla e(\bm{\mathsf e})\cdot\bm\upzeta+(\bm{\mathsf e}\otimes\bm{\mathsf d})\cdot\nabla\bm\upzeta
		%\\    &
		=\int_{\bm\chi(\varOmega)}((\bm{\mathsf e}\otimes\bm{\mathsf d})-e(\bm{\mathsf e})\bm{\mathsf I}):\nabla\bm\upzeta.
		\end{aligned}
		\end{equation*}
		We recognize the spatial tensor field
		$\bm{\mathsf M}=(\bm{\mathsf e}\otimes\bm{\mathsf d})
		-e(\bm{\mathsf e})\bm{\mathsf I}$ to be the so-called Maxwell stress.
	}
\end{remark}

\color{black}
\begin{remark}[{\it Darcy or Fick diffusion in the Biot poroelastic model}]\label{rem-Darcy-Fick}
	\upshape
	For a special choice of the Biot model \eqref{Biot-model}
	for $\varphi=\varphi(\nablait\bm\chi,\mono)$ in \eqref{enhanced-functional+},
	we obtain the stress and the chemical potential as
	\begin{subequations}\begin{align}%\nonumber\\[-2.5em]
		\label{Biot-model-stress}
		&\stress=\varphi_{_{\rm S}}'(\bm F)+
		\beta\mono_\text{\sc e} p\operatorname{Cof}\bm F\ \ \text{ with }\ \ p=M_{_{\rm B}}
		\big(\mono-\mono_\text{\sc e}\beta(1{-}\det\bm F)\big)
		\ \ \text{ and}
		\\&\label{Biot-model-mu}
		\mu\in\partial_\mono\varphi_{_{\rm S}}(\bm F,\mono)=\begin{cases}
		%M_{_{\rm B}}\big(\mono-\mono_\text{\sc e}\beta(1{-}\det\bm F)\big)
		p+\kappa\ln(\mono/\mono_\text{\sc e})&\text{ for }\ \mono>0\,.\\[.1em]
		\qquad\emptyset&\text{ for }\ \mono=0,\end{cases}
		\end{align}\end{subequations}
	%\marginpar{What are the dimensions of $\beta$ and $q_{\textsc e}$? If the
	%product $\beta q_{\textsc e}$ is not dimensionless, then $p$ does not have
	%dimensions of a pressure. In this case, I would incorporate
	%$q_{\textsc e}\beta$ in the definition of pressure $p$.}
	For a standard choice $\bm{\mathsf M}(\mono)=\mono\bm{\mathsf M}_0$ in
	\eqref{M-pullback}, the flux $j=-\bm M(\bm F,\mono)\nablait\mu$
	results to
	\begin{align}
	j=\!\!\!\!\!\!\ddd{-\mono\bm M_0(\bm F)\nablait p_{_{}}}{Darcy law}
	\!\!\!\!\!\!\!\!\!\!\ddd{-\,\kappa\bm M_0(\bm F)\nablait\mono_{_{}}}{Fick law}
	\ \text{ with }\ \ \bm M_0(\bm F)=\frac{(\operatorname{Cof}\bm F)^\top\bm{\mathsf M}_0\operatorname{Cof}\bm F}{\det\bm F}
	\label{Darcy/Fick}
	\\[-2.5em]\nonumber\end{align}
	provided $\mono>0$. Depending on the coefficient $\kappa>0$, either
	Darcy's mechanism or the Fick's one may dominate. In particular, we identified
	the ``diffusant pressure'' $p$ in \eqref{Biot-model-stress} which governs the
	Darcy law. For the Biot model under large strains, we also refer to
	\cite{biot1972theory} and, at small strains, to \cite{Roub17VMSS}.
	Counting the charges as occurring in the
	electrochemical potential \eqref{chem-pot}, the Darcy/Fick law
	\eqref{Darcy/Fick} enhances still as
	\begin{align}
	j=-\mono\bm M_0(\bm F)\nablait p-\kappa\bm M_0(\bm F)\nablait\mono-\mono\bm M_0(\bm F)\nablait\upphi(\bm\chi)\,,
	%  j=\!\!\!\!\!\!\ddd{-\mono\bm M_0(\bm F)\nablait p_{_{}}}{Darcy law}
	%\!\!\!\!\!\!\!\!\!\!\ddd{-\kappa\bm M_0\nablait\mono_{_{}}}{Fick law}
	%\!\!\!\!\!\!\!\!\!\!\ddd{-\mono\bm M_0(\bm F)\nablait\upphi(\bm\chi)}{drift current}
	\end{align}
	where the last term is a so-called drift current. We thus can see a
	drift-diffusion model (as used e.g.\ in monopolar semiconductors) combined
	with the Darcy flow due to mechanical pressure gradient.
\end{remark}
\color{black}

\section{Final remarks: \color{black}generalizations or modifications\color{black}}\label{sec-rem}
%        ~~~~~~~~~~~~~

Let us end this article by several remarks outlining possible
generalizations or modifications and, in most of cases, serious
difficulties related with them.

\begin{remark}[{\it Multi-component flow in charge poroelastic solid}]
	\upshape
	The models from Sections \ref{sec-mono} and \ref{sec-diffusion}
	can be easily merged in the sense that the fixed $\mono$ from
	Section \ref{sec-mono} is to be interpreted as the charge density
	of
	%depends
	\color{black}dopants \color{black}
	in the poroelastic solid while the varying $\mono$ from
	Section \ref{sec-diffusion} is the charge density
	of diffusant. \color{black}This may model elastic negatively-charged
	porous polymers with hydrogen cations (protons) in
	so-called polymer electrolytes as the central layer
	of PEM fuel cells \cite{ProWet09PEMF} or
	unipolar (elastic) semiconductor devices as field-effect transistors FETs or
	Gunn's diodes. \color{black} Even, this latter $\mono$ may be vector-valued if
	there are more than one diffusant. Even chemical reactions 
	between particular components of the diffusant can be involved.
	Then one should rather speak about concentrations $c_i$, $i=1,...,n$,
	of the charged diffusants with the specific charges $\mono_i$.  
	%Denoting $\mono=(\mono_0,\mono_1,...,\mono_n)$, 
	Denoting the charge of the fixed dopant as $\mono_0$,
	the system \eq{system}--(\ref{diffusion}a,b) can be generalized as:
	\begin{subequations}\label{diffusion-n}
		\begin{align}\label{system1-n}
		&\varrho\DDT{\bm\chi}
		-{\rm div}\big(\varphi'(\nablait\bm\chi)
		-{\rm div}\,\mathfrak{H}(\nablait^2\bm\chi)\big)
		%\,{\stress}
		+\mono\nabla\upphi(\bm\chi)=\bm f\,\ \ \ \
		%\text{ with }\ {\stress}=\varphi'(\nablait\bm\chi)
		%-{\rm div}\,\mathfrak{H}(\nablait^2\bm\chi),
		\\\label{system2-n}
		&\operatorname{div}((\varepsilon_0+\varepsilon_1|\nabla\upphi|^{p-2})
		\nabla\upphi)+\!\!\!\sum_{x\in\stackrel{\leftarrow}{\bm\chi}(\cdot,t)}\!
		\frac{\mono(x)+\sum_{i=1}^nc_i(x)\mono_i(x)}{\det(\nablait\bm\chi(t,x))}
		+\mathsf{\mono}_{\rm ext}(t,\cdot)=0
		\\
		&\label{diffusion+n}
		\DT c_i-{\rm div}(\bm M(x,\nablait\bm\chi,c_1,...,c_n)\nablait\mu)=
		r_i(c_1,...,c_n)
		\qquad
		%\text{ in }\ \varOmega 
		\ \text{ ($i=1,...,n$)},
		\\&\label{chem-pot-n}
		\mu_i%:=\partial_\mono\calE(\bm\chi,\mono,\upphi)
		%+\tau_{_{\rm R}}\DT\mono
		\in\partial_{c_i}\varphi(\nablait\bm\chi,c_1,...,c_n)+\upphi(\bm\chi)
		\qquad
		%\text{ in }\ \varOmega 
		\hspace*{7.2em}\ \text{ ($i=1,...,n$)},
		\end{align}\end{subequations}
	with $r_i(\mono)$ the chemical-reaction rate of the $i$-th constituent
	and $\bm M$ is now valued in $\R^{d\times d\times n}$, caring also about 
	cross-diffusion effects. \color{black}The specific applications may cover  
	hydrogen oxidation and the oxygen reduction reactions in porous electrodes
	(i.e.\ cathode/anode layers) of PEM fuel cells \cite{ProWet09PEMF} influenced by mechanical
	loading \cite{BCMK05DRFC,MaBoBen08VRNE}
	or in bipolar doped semiconductor devices (diodes, transistors, thyristors)
	with chemically reacting (through generation/recombination mechanisms)
	electron and hole charges under mechanical load, or
	Li-cations and electrons in porous batteries, etc.\color{black}
\end{remark}

%\begin{remark}[Actual Cahn-Hilliard model]
%\upshape
%.... the gradient  pulled-back from the actual deformed 
%configuration into the reference one, i.e.\
%$\frac12\kappa|\nablait\bm\chi^{-\top}\nablait\mono|^2$
%This gives rise also to a Korteweg stress in the momentum equilibrium.
%\COMMENT{MAYBE THIS REMARK NOT NEEDED IF Cahn-Hilliard NOT NEEDED?}
%\end{remark}

\begin{remark}[{\it Inertia of the diffusant}]\label{rem-inertia}
	\upshape
	If the mass density of the diffusant denoted here by $\varrho_1$ is not 
	negligible, one should rather consider the overall mass density as 
	$\varrho=\varrho_0+\mono\varrho_1$ with $\varrho_0$ now standing for the 
	mass density of the poroelastic solid itself (e.g.\ a polymeric matrix or a 
	porous rock).
	The energetic test of the inertial term $\varrho\DDT{\bm\chi}$ 
	by $\DT{\bm\chi}$ would then lead to 
	$\frac{\partial}{\partial t}\frac12\varrho|\DT{\bm\chi}|^2
	-\frac12\DT\mono\varrho_1|\DT{\bm\chi}|^2$. The newly arising term 
	$\frac12\DT\mono\varrho_1|\DT{\bm\chi}|^2$, which would have to be estimated 
	``on the right-hand side, seems to bring serious difficulties even for the 
	a-priori estimation.
	%
	%The overall mass is $\varrho(t,x)=\varrho_{_{\rm S}}(x)+\mono(t,x)$ with $\varrho_{_{\rm S}}$ the mass density of 
	%the poroelastic medium itself (e.g.\ a polymeric matrix or a porous rock) with the elastic 
	%response given by the potential $\varphi_{_{\rm S}}$.
	%%Then $\mono=\varrho$, cf.\ \eqref{options}, we can take $\varphi(F,\mono)=\varphi_{_{\rm BIOT}}(F,\mono{-}\varrho_{_{\rm S}})$.
	%Note that $\varrho$ in kinetic energy now depends in time, so that 
	In the weak formulation \eqref{w-sln1}, 
	we would obtain still the term 
	$\int_Q\DT\varrho\DT{\bm\chi}{\cdot}v\,\d x\d t$ %\COMMENT{NOTATION TO MODIFY} 
	which requires to introduce a viscosity
	%$\tau_{_{\rm R}}>0$
	in \eqref{chem-pot} to control 
	$\DT\varrho=(\varrho_{_{\rm S}}{+}\mono)\!\DT{^{}}=\DT\mono$ in $L^2(Q)$.
	%\COMMENT{EVEN MORE: 
	For analysis, \eqref{chem-pot} is to be still augmented
	by the term $-{\rm div}(\tau_{_{\rm R}}|\nablait\DT\mono|^{p-2}\nablait\DT\mono)$
	with $p>3$ and  with $\tau_{_{\rm R}}>0$ a relaxation time; 
	such a gradient-viscous Cahn-Hilliard model was 
	suggested in \cite{Gurt96GGLC}, and then, assuming a suitable regularization of \eqref{Biot-model}
	to guarantee $|\partial_\mono \varphi(\bm F,\mono)|\le C(1{+}|\mono|)$, e.g.\
	$\det\bm F/(1{+}\eps\det\bm F)$ instead of $\det\bm F$ for small $\eps>0$,
	we can test separately \eqref{abstract-system+2} by $\DT\mono$ to estimate
	$\DT\mono$ in $L^p(I;W^{1,p}(\varOmega))$ and then, using 
	$W^{1,p}(\varOmega)\subset L^\infty(\varOmega)$, treat 
	$\int_0^t\int_\varOmega\DT\varrho|\DT{\bm\chi}|^2,\d x\d t$ by Gronwall inequality.
	Then \eqref{dissip-pot} is to be augmented by some viscosity 
	contribution like 
	$\tau_{_{\rm R}}|\nablait\DT\mono|^p/p$.
	%IS IT ALSO SOMEWHERE ELSE IN LITERATURE??} ................. and 
	Also the ``dual'' estimate of $\varrho\DDT{\bm\chi}$
	seems problematic.
\end{remark}

\begin{remark}[{\it Attractive monopolar interactions}]
	\upshape
	Another prominent example of a monopolar interaction is gravitation. Then
	$\mono$ is mass density and $\phi$ is the gravitational field. 
	The essential difference is that the gravitational constant
	occurs instead of $\eps$ in \eq{static-gravity-functional} and then also in 
	\eq{system2} and \eq{gravity-large-strain} with a negative
	sign.
	The coercivity of the static stored energy is not automatic and, roughly 
	speaking needs sufficiently small total mass of a medium sufficiently 
	elastically tough. 
	%It is needed also in 
	%the dynamical case where its weaker coercivity when 
	%$\|\cdot\|_{L^2(\Omega;\R^d)}^2$ is added with a sufficiently large weights suffices.
	On top of it, $\eps_1=0$ is the only reasonable choice in such gravitation
	interaction, which is not covered by the proof of 
	Proposition~\ref{prop-1}.
\end{remark}

\begin{remark}[{\it Dipolar long-range interactions}]\label{rem-dipol-inter}
	\upshape
	Considering a vector-valued density $\vec{\mono}:\Omega\to\R^m$ and
	$\vec{\mono}_{\rm ext}^{}(\cdot,t):\R^d\to\R^m$ instead of just scalar valued does 
	not change \eq{eq:55} and 
	%Lemma~\ref{Giuseppe-lemma}
	\eq{eq:55}, but the stored energy 
	%\eq{static-gravity-functional-evol++}
	\eq{static-gravity-functional+} is to be modified as:
	\begin{align*}
	\nonumber
	\calE(\bm\chi,\phi)=\!
	\int_\Omega\!\varphi(\nabla \bm\chi)-\vec{\mono}\cdot\nabla\phi(\bm\chi)\,\d x
	+\int_{\R^d}\frac\kappa2\big|\nabla\phi(\mathsf x)\big|^2
	%-\vec{\mono}_{\rm ext}\cdot\nabla\phi
	\,\d\mathsf x+\mathscr{H}(\nabla^2\bm\chi)\,.
	%\label{static-gravity-functional+++}
	\end{align*}
	Instead 
	of \eq{system}, the Hamiltonian variational principle then modifies
	the system \eq{system} as
	\begin{subequations}\label{system-dipol}
		\begin{align}\label{system1-dipol}
		&\varrho\DDT{\bm\chi}
		-{\rm div}\,\stress=\bm f+[(\nabla^2\phi)\circ \bm\chi]\vec{\mono}\ \ \ \ \
		\text{ with }\ \ \ 
		\stress=\varphi'(\nabla \bm\chi)-{\rm Div}(\mathfrak{H}\nablait^2\bm\chi)
		%&&\text{on }Q,
		\\\label{system2-dipol}
		&{\rm div}(\varepsilon\nabla\phi)=%\chi_{y(\Omega)}^{}
		{\rm div}\,\bigg(\sum_{x\in \bm\chi^{-1}(\cdot,t)}\frac{\vec{\mono}(x)}
		{\det(\nabla \bm\chi(x,t))}
		+\vec{\mono}_{\rm ext}^{}(\cdot,t)\bigg)\ \ \ \ \text{for a.a.\ $t\in I$}.
		%&&\text{on }\R^d
		\end{align}\end{subequations}
	The interpretation is of $\phi$ and 
	$\vec{\mono}$ and of $\kappa$ is the potential of magnetic field and magnetization and permeability in elastic 
	ferromagnets, respectively, or alternatively electrostatic field and 
	polarization and permittivity 
	in elastic piezoelectric materials with spontaneous polarization.
	The analysis is, however, even more problematic comparing to the repulsive 
	monopolar case due to less regularity  of the Poisson equation 
	\eq{system2-dipol} which has the divergence of an $L^1$-function
	in the right-hand side. Actually, this difficulty is not seen in 
	the a-priori estimation strategy \eq{system-est+} as well as in the limit
	passage in the Poisson equation \eq{system2-dipol} written in the form 
	\eq{w-sln3}.
	%can again be done for smooth test functions.
	Yet, the difficulty occurs in the term $(\nabla^2\phi)\circ \bm\chi$ in the 
	right-hand side of \eq{system1-dipol} because $\nabla^2\phi$ hardly can 
	be continuous and the composition with $\bm\chi$ even does not need to be 
	measurable.
\end{remark}

%\marginpar{WE SHOULD ALSO ADD A REMARK WHERE WE EXPLAIN WHY THE EXTERNAL CHARGE CANNOT BE TIME INDEPENDENT}

%\section*{Acknowledgments}
%This research was partly supported through the grants Czech Science Foundation 
%16-03823S, 
%%``Homogenization and multi-scale computational modeling of flow and 
%%nonlinear interactions in porous smart structures'',
%16-34894L, 
%%``Variational structures in continuum thermomechanics of solids'', 
%and
%17-04301S 
%%``Advanced mathematical methods for dissipative evolutionary systems'',
%as well as through the institutional project RVO:\,61388998 (\v CR).

%GT acknowledges support from the Grant of Excellence Departments, MIUR-Italy (ARTICOLO 1, COMMI 314-337 LEGGE 232/2016), and the support of INdAM-GNFM.

%\COMMENT{MAYBE STILL Vienna - MARTIN TO ASK}

%\COMMENT{?????}
%He also thanks Miroslav \v{S}ilhav\'y for fruitful discussions.

%\COMMENT{SOME PAPERS IN J.Elast. (BUT I DID NOT SEE THEM):\\
%V.K. Kalpakides E.K. Agiasofitou:
%On Material Equations in Second Gradient Electroelasticity.
%Journal of Elasticity (2002) 67: 205--227. 
%https://doi.org/10.1023/A:1024926609083
%\\.\\
%M. Singh P. D. S. Verma:
%Nonlinear couple stress theory of elastic dielectrics with applications to 
%dynamic deformations. Journal of Elasticity 1983, Vol13, pp 379--393 
%\\.\\
%A. Dorfmann R. W. Ogden:
%Nonlinear Electroelastic Deformations.
%Journal of Elasticity 2006, Volume 82, Issue 2, pp 99--127}

\bibliographystyle{abbrv}

\end{sloppypar}

\end{document}